\DeclareMathOperator{\divergence}{div}
\theoremstyle{plain}
\newtheorem{theorem}{Theorem}[section]
\newtheorem{cor}[theorem]{Corollary}
\newtheorem{prop}[theorem]{Proposition}
\newtheorem{thm}{Theorem} 
\theoremstyle{definition} 
\newtheorem{exmpl}{Example}
\newtheorem{thmA}{Theorem}
\newcommand*{\bigchi}{\mbox{\Large$\chi$}}
\DeclareMathOperator{\supp}{supp}
\DeclareMathOperator{\inn}{int}
\DeclareMathOperator{\sgn}{sgn}
\DeclareMathOperator{\card}{card}
\title{Equilibrium measures on trees}
\author{{Nicola Arcozzi}, {Matteo Levi}}
\thanks{The two authors are partially supported by the 2015 PRIN grant \textit{Real and Complex Manifolds: Geometry, Topology
and Harmonic Analysis} of the Italian Ministry of Education (MIUR)}
\date{}
\subjclass[2010]{Primary: 31C15. Secondary: 05C63, 05C05, 05B45, 52C20}
\keywords{infinite trees; nonlinear potential theory; capacity; equilibrium measures; tilings}
\address{N. Arcozzi \\ Dipartimento di Matematica \\ Universit\'a di Bologna \\ 40127 Bologna, Italy}
\email{nicola.arcozzi@unibo.it}
\address{M. Levi \\ Dipartimento di Matematica \\ Politecnico di Torino \\ 10129 Torino, Italy}
\email{matteo.levi@polito.it}
\begin{document}

\maketitle

\begin{abstract}
We give a characterization of equilibrium measures for $p$-capacities on the boundary of an infinite tree of arbitrary (finite) local degree. 
For $p=2$, this provides, in the special case of trees, a converse to a theorem of Benjamini and Schramm, which interpretes 
the equilibrium measure of a planar graph's boundary in terms of square tilings of cylinders.
\end{abstract}

\section*{Introduction}

In Electrostatics, an amount of, say positive, electric charge free to move across a conductor $A$ in the Euclidean space will reach an equilibrium configuration $\mu$, which at the same time: (i) minimizes the energy ${\mathcal E}(\mu)$ carried by the generated electrostatic potential; (ii) minimizes the maximum value of the potential ; (iii) makes the potential constant on all of $A$, but possibly for a small exceptional set. For a given system of units, there is an amount $\|\mu\|$ of charge for which the potential on (most of) $A$ is unitary. The total charge $\|\mu\|$ is the \textit{capacity} of the conductor and $\mu$ is the corresponding \textit{equilibrium measure} of $A$. The mathematical theory of electrostatics, developed by Gauss, then put on firm mathematical foundations by Frostman, was later extended in many directions. See \cite{brelot} for a survey of axiomatic linear theories which goes far beyond the scope of this article, and \cite{adams} for a rather general axiomatic non-linear theory. The problem we consider here, in a special instance, is that of characterizing equilibrium measures. Namely, given a positive measure $\mu$, our ``measurable'', is there a way to tell whether or not it is the equilibrium measure for some conductor $A$? The equilibrium measures are known to satisfy a number of properties, but to the best of our knowledge a complete answer is available only for the equilibrium measure of the boundary of a planar graphs. The case of finite, planar graphs is in \cite{schramm}, and a combinatorial interpretation of the equilibrium measure $\mu$ of the boundary of possibly infinite, planar graphs satisfying an extra technical hypothesis, is in a beautiful article by Benjamini and Schramm \cite{benjamini}.

In this article, we characterize the equilibrium measures, within Nonlinear Potential Theory, for subsets of the boundary of an infinite tree. Benjamini and Schramm's theorem, in the case of trees, would apply to the equilibrium measures of a closed subset of the tree boundary. We find a condition which characterizes the equilibrium measures of subset of the trees, providing, in this special context, a converse to their result.

In order to state our main finding, we fix some minimal notation, to be better developed in the next section.
A tree $T$ is a connected graph with no cycles. We denote by $E$ the set of its edges and by $V$ the set of its vertices. Given two vertices $x,y$, we write $x\sim y$ if they are connected by an edge. We consider infinite trees such that each vertex has a finite but arbitrary number of neighborhoods. We assume that there is a unique, distinguished edge $\omega$ one of whose endpoints, $o$, is not endpoint of any other edge. We say that $o$ is the \it root \rm of $T$.
 The boundary $\partial T$ of the tree can be classically identified with the set of labels corresponding to half infinite geodesics starting at the root (this coincides with both the Carath\'eodory and the Martin boundary of $T$). We set $\overline{V}=V\cup\partial T$.
We partially order $\overline{V}$ by writing $\xi\geq y$ if and only if $y$ lies on the geodesic connecting $o$ and $\xi$ (or if $y$ belongs to the geodesic labelled by $\xi$ in the case $\xi\in\partial T$). Thus, edges are oriented and we can identify $E$ with the subset of couples $(x,y)$ in $V\times V$ such that $x\sim y$ and $y>x$.  The boundary of the tree is a compact metric space with respect to the \it visual metric \rm (see the section on preliminaries). We have then Borel signed measures on $\partial T$, which we call \textit{charges}, while \textit{measures} are intended to be nonnegative. As we will detail in the next section, using these ingredients and following the classical theory presented in \cite{adams}, we can develop a Nonlinear Potential Theory on the tree. In particular, for any $p\in (1,+\infty)$ one can associate to a charge $\mu$ a nonlinear potential $V_p\mu:\overline{V}\to \mathbb{R}$, and a notion of $p-$capacity for subsets of the boundary is made available. By the general theory, we know that for each \textit{capacitable} $A\subseteq\partial T$, there exists a unique positive measure $\mu^A$ on $\partial T$ realizing its $p-$capacity, namely such that $c_p(A)=\|\mu^A\|$. We call $\mu^A$ the $p-$\textit{equilibrium measure} for $A$.

Equilibrium measures are strictly related to trace inequalities for discrete Hardy operators \cite[Theorem 5]{arcozzi2008capacity}.
The simplest interpretation, however, is in terms of elementary rescaling properties of trees, see Section \ref{SecRescaling}. Our goal is characterizing which measures $\mu$ are equilibrium measures of some subset of the boundary.
Such a characterization is encoded in the following integro-differential equation, which we will call, accordingly, the \textit{equilibrium equation}:
\begin{equation}\label{equilibrium equation}
\nabla g[x,y]|\nabla g|^{p-2}[x,y]\Big(1-g(x)\Big)=\sum_{\substack{[z,w]\in E \\ z\geq x}}|\nabla g[z,w]|^{p}, \quad  [x,y]\in E.
\end{equation}
In the above expression $p,p'\in(1,\infty)$ are H\"older conjugate exponents, $E$ denotes the set of edges of the tree, $g$ is a function defined on the vertices, $\nabla g[x,y]:=g(y)-g(x)$ denotes its gradient on the edge $[x,y]$ and the right-hand side is a Sobolev energy restricted to a the full subtree rooted at $[x,y]$. Our main result is the following.
\begin{thm}\label{main 1}
(i) Let $\mu$ be the $p-$equilibrium measure for a set $A\subseteq \partial T$. Then the function $g=V_p\mu$ solves \eqref{equilibrium equation}.

(ii) Conversely, let $g:V\to\mathbb{R}$ be a solution of \eqref{equilibrium equation} with $\Vert g\Vert_\infty<1$. Then, there exists an $\mathcal{F}_{\sigma}$ set $A\subseteq\partial T$ with equilibrium measure $\mu^A$ such that $g=V_p(\mu^A)$.
\end{thm}
Observe that equation \eqref{equilibrium equation} is non linear even in Linear Potential Theory. This is not surprising, since linear combinations of equilibrium measures are only seldom equilibrium measures themselves. From the discussion in Section \ref{SecFlows} it will be clear that solutions of \eqref{equilibrium equation} are automatically $p-$harmonic functions which are increasing along geodesics emanating from $o$. We will see how the equilibrium equation can be regarded as an equation for measures and for edge functions, leading to the reformulations \eqref{eq:formula} and \eqref{eq:edges} respectively.

The equation can be interpreted in several ways. In the linear case $p=2$  it says that equilibrium measures on trees can be associated to particular tilings of rectangles by squares. This gives an independent proof of the tiling theorem of Benjamini and Schramm \cite[Theorem 4.1]{benjamini}, in the special case of a tree, and, more interestingly, it provides a converse result.

Here is the precise statement (see Section \ref{SecTiling}). We say that a square tiling of the rectangle $R=[0,t]\times[0,1]$ has combinatorics prescribed by a tree $T$ if there is a bijection $\alpha\mapsto Q_\alpha$ from the edge set of $T$ to the set of tiles such that $\alpha$ and $\beta$ have a common vertex and $\alpha<\beta$ if and only if the squares $Q_\alpha$ and $Q_\beta$ are 	neighbouring in the tiling and the upper side of $Q_\beta$ lies on the lower side of $Q_\alpha$.

\begin{multicols}{2}

\definecolor{qqqqff}{rgb}{0.,0.,1.}
\definecolor{ccqqqq}{rgb}{0.8,0.,0.}
\begin{tikzpicture}[line cap=round,line join=round,>=triangle 45,x=1.0cm,y=1.0cm]
\clip(-1.,-0.5) rectangle (5.,8.5);
\fill[line width=0.8pt, fill opacity=0] (4.,8.) -- (0.,8.) -- (0.,4.) -- (4.,4.) -- cycle;
\fill[line width=0.8pt, fill opacity=0] (4.,4.) -- (1.8490020792533923,4.) -- (1.849002079253392,1.8490020792533943) -- (4.,1.849002079253393) -- cycle;
\fill[line width=0.8pt, fill opacity=0] (1.8490020792533923,4.) -- (0.,4.) -- (0.,2.1509979207466072) -- (1.849002079253392,2.1509979207466072) -- cycle;
\fill[line width=0.8pt, fill opacity=0] (1.849002079253392,2.1509979207466072) -- (1.3134632751944144,2.1509979207466072) -- (1.3134632751944142,1.6154591166876298) -- (1.8490020792533919,1.6154591166876295) -- cycle;
\fill[line width=0.8pt, fill opacity=0] (4.,1.849002079253393) -- (3.5419723094289957,1.8490020792533932) -- (3.5419723094289957,1.3909743886823909) -- (4.,1.3909743886823907) -- cycle;
\fill[line width=0.8pt, fill opacity=0] (3.5419723094289957,1.8490020792533932) -- (2.6218324672246793,1.8490020792533939) -- (2.621832467224679,0.9288622370490778) -- (3.5419723094289948,0.928862237049077) -- cycle;
\fill[line width=0.8pt, fill opacity=0] (2.6218324672246793,1.8490020792533939) -- (1.849002079253392,1.8490020792533943) -- (1.8490020792533923,1.0761716912821073) -- (2.621832467224679,1.076171691282107) -- cycle;
\fill[line width=0.8pt, fill opacity=0] (1.8490020792533919,1.6154591166876295) -- (1.3134632751944142,1.6154591166876298) -- (1.3134632751944144,1.0799203126286525) -- (1.8490020792533917,1.0799203126286523) -- cycle;
\fill[line width=0.8pt, fill opacity=0] (1.8490020792533917,1.0799203126286523) -- (1.616297016381219,1.0799203126286523) -- (1.6162970163812187,0.8472152497564795) -- (1.8490020792533914,0.8472152497564794) -- cycle;
\fill[line width=0.8pt, fill opacity=0] (1.616297016381219,1.0799203126286523) -- (1.3134632751944144,1.0799203126286525) -- (1.3134632751944144,0.7770865714418478) -- (1.616297016381219,0.7770865714418478) -- cycle;
\fill[line width=0.8pt, fill opacity=0] (1.8490020792533914,0.8472152497564794) -- (1.6162970163812187,0.8472152497564795) -- (1.616297016381219,0.6145101868843069) -- (1.8490020792533914,0.6145101868843069) -- cycle;
\fill[line width=0.8pt, fill opacity=0] (1.616297016381219,0.7770865714418478) -- (1.4892160113937543,0.7770865714418478) -- (1.4892160113937543,0.6500055664543831) -- (1.616297016381219,0.6500055664543831) -- cycle;
\fill[line width=0.8pt, fill opacity=0] (1.4892160113937543,0.7770865714418478) -- (1.3134632751944144,0.7770865714418478) -- (1.3134632751944144,0.6013338352425079) -- (1.4892160113937543,0.6013338352425079) -- cycle;
\fill[line width=0.8pt, fill opacity=0] (4.,1.3909743886823907) -- (3.5419723094289957,1.3909743886823909) -- (3.541972309428996,0.9329466981113885) -- (4.,0.9329466981113888) -- cycle;
\fill[line width=0.8pt, fill opacity=0] (3.5419723094289948,0.928862237049077) -- (3.2438189126237997,0.9288622370490772) -- (3.2438189126237997,0.6307088402438822) -- (3.5419723094289948,0.6307088402438821) -- cycle;
\fill[line width=0.8pt, fill opacity=0] (3.2438189126237997,0.9288622370490772) -- (2.943013355541281,0.9288622370490776) -- (2.9430133555412805,0.6280566799665587) -- (3.2438189126237993,0.6280566799665583) -- cycle;
\fill[line width=0.8pt, fill opacity=0] (2.943013355541281,0.9288622370490776) -- (2.825017921339196,0.9288622370490776) -- (2.8250179213391955,0.8108668028469925) -- (2.9430133555412805,0.8108668028469923) -- cycle;
\fill[line width=0.8pt, fill opacity=0] (2.825017921339196,0.9288622370490776) -- (2.621832467224679,0.9288622370490778) -- (2.621832467224679,0.7256767829345607) -- (2.825017921339196,0.7256767829345606) -- cycle;
\fill[line width=0.8pt, fill opacity=0] (4.,0.9329466981113888) -- (3.7451577635435136,0.9329466981113886) -- (3.745157763543514,0.6781044616549046) -- (4.,0.6781044616549048) -- cycle;
\fill[line width=0.8pt, fill opacity=0] (3.7451577635435136,0.9329466981113886) -- (3.541972309428996,0.9329466981113885) -- (3.5419723094289957,0.7297612439968709) -- (3.7451577635435136,0.7297612439968708) -- cycle;
\fill[line width=0.8pt, fill opacity=0] (2.621832467224679,1.076171691282107) -- (2.2129381466058966,1.0761716912821073) -- (2.2129381466058966,0.6672773706633248) -- (2.621832467224679,0.6672773706633248) -- cycle;
\fill[line width=0.8pt, fill opacity=0] (2.2129381466058966,1.0761716912821073) -- (1.8490020792533923,1.0761716912821073) -- (1.8490020792533919,0.712235623929603) -- (2.212938146605896,0.7122356239296028) -- cycle;
\fill[line width=0.8pt, fill opacity=0] (1.616297016381219,0.6500055664543831) -- (1.4892160113937543,0.6500055664543831) -- (1.4892160113937543,0.5229245614669185) -- (1.616297016381219,0.5229245614669185) -- cycle;
\fill[line width=0.8pt, fill opacity=0] (1.8490020792533914,0.6145101868843069) -- (1.7285497690226117,0.6145101868843069) -- (1.7285497690226115,0.4940578766535272) -- (1.8490020792533912,0.4940578766535271) -- cycle;
\fill[line width=0.8pt, fill opacity=0] (1.7285497690226117,0.6145101868843069) -- (1.616297016381219,0.6145101868843069) -- (1.6162970163812187,0.5022574342429141) -- (1.7285497690226115,0.502257434242914) -- cycle;
\fill[line width=0.8pt, fill opacity=0] (1.4892160113937543,0.6013338352425079) -- (1.410286793452321,0.6013338352425079) -- (1.4102867934523209,0.5224046173010748) -- (1.489216011393754,0.5224046173010747) -- cycle;
\fill[line width=0.8pt, fill opacity=0] (1.410286793452321,0.6013338352425079) -- (1.3134632751944144,0.6013338352425079) -- (1.3134632751944142,0.5045103169846012) -- (1.4102867934523209,0.5045103169846011) -- cycle;
\fill[line width=0.8pt, fill opacity=0] (2.2129381466058966,0.7122356239296029) -- (2.035385045419575,0.7122356239296029) -- (2.0353850454195754,0.5346825227432812) -- (2.212938146605897,0.5346825227432817) -- cycle;
\fill[line width=0.8pt, fill opacity=0] (2.035385045419575,0.7122356239296029) -- (1.8490020792533919,0.7122356239296033) -- (1.8490020792533914,0.52585265776342) -- (2.0353850454195745,0.5258526577634197) -- cycle;
\fill[line width=0.8pt, fill opacity=0] (2.6218324672246784,0.6672773706633245) -- (2.490019429773477,0.6672773706633248) -- (2.4900194297734766,0.5354643332121234) -- (2.621832467224678,0.535464333212123) -- cycle;
\fill[line width=0.8pt, fill opacity=0] (2.490019429773477,0.6672773706633248) -- (2.3508338750390565,0.6672773706633248) -- (2.3508338750390565,0.5280918159289043) -- (2.490019429773477,0.5280918159289043) -- cycle;
\fill[line width=0.8pt, fill opacity=0] (2.3508338750390565,0.6672773706633248) -- (2.212938146605896,0.6672773706633245) -- (2.212938146605896,0.529381642230164) -- (2.3508338750390565,0.5293816422301643) -- cycle;
\fill[line width=0.8pt, fill opacity=0] (2.8250179213391977,0.7256767829345606) -- (2.621832467224678,0.7256767829345608) -- (2.621832467224678,0.5224913288200411) -- (2.8250179213391977,0.5224913288200411) -- cycle;
\fill[line width=0.8pt, fill opacity=0] (2.943013355541281,0.810866802846994) -- (2.8250179213391946,0.8108668028469942) -- (2.825017921339195,0.6928713686449077) -- (2.943013355541281,0.692871368644908) -- cycle;
\fill[line width=0.8pt, fill opacity=0] (2.943013355541281,0.692871368644908) -- (2.825017921339195,0.6928713686449077) -- (2.8250179213391955,0.5748759344428218) -- (2.943013355541281,0.5748759344428223) -- cycle;
\fill[line width=0.8pt, fill opacity=0] (3.2438189126237993,0.6280566799665583) -- (3.1418629537083134,0.6280566799665585) -- (3.1418629537083134,0.5261007210510723) -- (3.2438189126237993,0.5261007210510723) -- cycle;
\fill[line width=0.8pt, fill opacity=0] (3.1418629537083134,0.6280566799665585) -- (3.0460861438180094,0.6280566799665586) -- (3.0460861438180094,0.5322798700762547) -- (3.1418629537083134,0.5322798700762545) -- cycle;
\fill[line width=0.8pt, fill opacity=0] (3.0460861438180094,0.6280566799665586) -- (2.943013355541281,0.6280566799665588) -- (2.943013355541281,0.5249838916898304) -- (3.0460861438180094,0.5249838916898303) -- cycle;
\fill[line width=0.8pt, fill opacity=0] (2.943013355541281,0.5748759344428223) -- (2.8859463279440996,0.574875934442822) -- (2.8859463279440996,0.5178089068456407) -- (2.943013355541281,0.5178089068456407) -- cycle;
\fill[line width=0.8pt, fill opacity=0] (2.8859463279440996,0.574875934442822) -- (2.8250179213391955,0.5748759344428218) -- (2.8250179213391955,0.5139475278379178) -- (2.8859463279440996,0.5139475278379179) -- cycle;
\fill[line width=0.8pt, fill opacity=0] (3.5419723094289943,0.6307088402438834) -- (3.4390755109878297,0.6307088402438821) -- (3.4390755109878315,0.5278120418027176) -- (3.5419723094289957,0.527812041802719) -- cycle;
\fill[line width=0.8pt, fill opacity=0] (3.4390755109878297,0.6307088402438821) -- (3.340616716797701,0.6307088402438822) -- (3.340616716797701,0.5322500460537534) -- (3.4390755109878297,0.5322500460537534) -- cycle;
\fill[line width=0.8pt, fill opacity=0] (3.340616716797701,0.6307088402438822) -- (3.2438189126238,0.6307088402438836) -- (3.243818912623798,0.5339110360699829) -- (3.3406167167976992,0.533911036069981) -- cycle;
\fill[line width=0.8pt, fill opacity=0] (4.,0.678104461654905) -- (3.8641663866677303,0.6781044616549047) -- (3.864166386667731,0.5422708483226394) -- (4.,0.5422708483226398) -- cycle;
\fill[line width=0.8pt, fill opacity=0] (3.8641663866677303,0.6781044616549047) -- (3.745157763543514,0.6781044616549048) -- (3.745157763543514,0.5590958385306887) -- (3.8641663866677303,0.5590958385306886) -- cycle;
\fill[line width=0.8pt, fill opacity=0] (3.745157763543515,0.7297612439968706) -- (3.644100369671894,0.7297612439968708) -- (3.644100369671894,0.6287038501252499) -- (3.745157763543515,0.6287038501252498) -- cycle;
\fill[line width=0.8pt, fill opacity=0] (3.644100369671894,0.7297612439968708) -- (3.5419723094289948,0.7312034070612733) -- (3.540530146364592,0.6290753468183741) -- (3.642658206607491,0.6276331837539715) -- cycle;
\fill[line width=0.8pt, fill opacity=0] (3.642658206607491,0.6276331837539715) -- (3.5915095592080015,0.6283554601772997) -- (3.5907872827846727,0.5772068127778103) -- (3.6419359301841623,0.5764845363544817) -- cycle;
\fill[line width=0.8pt, fill opacity=0] (3.5915095592080015,0.6283554601772997) -- (3.540530146364592,0.6290753468183741) -- (3.539810259723517,0.5780959339749644) -- (3.590789672566927,0.5773760473338898) -- cycle;
\fill[line width=0.8pt, fill opacity=0] (3.745157763543515,0.6287038501252498) -- (3.6938550656876656,0.6287038501252499) -- (3.693855065687666,0.5774011522694005) -- (3.745157763543515,0.5774011522694007) -- cycle;
\fill[line width=0.8pt, fill opacity=0] (3.6938550656876656,0.6287038501252499) -- (3.644100369671894,0.6287038501252499) -- (3.6441003696718934,0.5789491541094781) -- (3.693855065687665,0.5789491541094779) -- cycle;
\fill[line width=0.8pt, fill opacity=0] (3.8641663866677303,0.5590958385306886) -- (3.8107596057251354,0.5590958385306886) -- (3.8107596057251354,0.5056890575880937) -- (3.8641663866677303,0.5056890575880937) -- cycle;
\fill[line width=0.8pt, fill opacity=0] (3.8107596057251354,0.5590958385306886) -- (3.745157763543514,0.5590958385306887) -- (3.745157763543514,0.4934939963490672) -- (3.8107596057251354,0.4934939963490672) -- cycle;
\fill[line width=0.8pt, fill opacity=0] (3.745157763543515,0.5774011522694007) -- (3.693855065687666,0.5774011522694005) -- (3.693855065687667,0.5260984544135516) -- (3.7451577635435154,0.5260984544135523) -- cycle;
\fill[line width=0.8pt, fill opacity=0] (4.,0.5422708483226398) -- (3.9545031021064716,0.5422708483226397) -- (3.9545031021064707,0.49677395042911576) -- (4.,0.4967739504291153) -- cycle;
\fill[line width=0.8pt, fill opacity=0] (3.9545031021064716,0.5422708483226397) -- (3.909014905504744,0.5422708483226395) -- (3.909014905504744,0.49678265172091196) -- (3.9545031021064716,0.49678265172091207) -- cycle;
\fill[line width=0.8pt, fill opacity=0] (3.909014905504744,0.5422708483226395) -- (3.864166386667731,0.5422708483226394) -- (3.8641663866677316,0.4974223294856266) -- (3.909014905504744,0.4974223294856268) -- cycle;
\fill[line width=0.8pt, fill opacity=0] (3.693855065687665,0.5789491541094779) -- (3.6441003696718934,0.5789491541094781) -- (3.644100369671894,0.5291944580937064) -- (3.693855065687665,0.5291944580937065) -- cycle;
\fill[line width=0.8pt, fill opacity=0] (3.6419359301841623,0.5764845363544817) -- (3.5907872827846727,0.5772068127778103) -- (3.590065006361345,0.5260581653783207) -- (3.641213653760834,0.5253358889549925) -- cycle;
\fill[line width=0.8pt, fill opacity=0] (3.5907872827846727,0.5772068127778103) -- (3.539810259723517,0.5780959339749644) -- (3.538921138526362,0.5271189109138086) -- (3.5898981615875183,0.5262297897166541) -- cycle;
\fill[line width=2.pt,color=black, fill opacity=0] (4.,8.) -- (0.,8.) -- (0.004368420828023068,0.5076994075353054) -- (4.007847529220136,0.501168446510196) -- cycle;
\draw [line width=0.8pt] (4.,8.)-- (0.,8.);
\draw [line width=0.8pt] (0.,8.)-- (0.,4.);
\draw [line width=0.8pt] (0.,4.)-- (4.,4.);
\draw [line width=0.8pt] (4.,4.)-- (4.,8.);
\draw [line width=0.8pt] (4.,4.)-- (1.8490020792533923,4.);
\draw [line width=0.8pt] (1.8490020792533923,4.)-- (1.849002079253392,1.8490020792533943);
\draw [line width=0.8pt] (1.849002079253392,1.8490020792533943)-- (4.,1.849002079253393);
\draw [line width=0.8pt] (4.,1.849002079253393)-- (4.,4.);
\draw [line width=0.8pt] (1.8490020792533923,4.)-- (0.,4.);
\draw [line width=0.8pt] (0.,4.)-- (0.,2.1509979207466072);
\draw [line width=0.8pt] (0.,2.1509979207466072)-- (1.849002079253392,2.1509979207466072);
\draw [line width=0.8pt] (1.849002079253392,2.1509979207466072)-- (1.8490020792533923,4.);
\draw [line width=0.8pt] (1.849002079253392,2.1509979207466072)-- (1.3134632751944144,2.1509979207466072);
\draw [line width=0.8pt] (1.3134632751944144,2.1509979207466072)-- (1.3134632751944142,1.6154591166876298);
\draw [line width=0.8pt] (1.3134632751944142,1.6154591166876298)-- (1.8490020792533919,1.6154591166876295);
\draw [line width=0.8pt] (1.8490020792533919,1.6154591166876295)-- (1.849002079253392,2.1509979207466072);
\draw [line width=0.8pt] (1.3134632751944144,2.1509979207466072)-- (0.,2.1509979207466072);
\draw [line width=0.8pt] (0.,2.1509979207466072)-- (0.,0.8375346455521929);
\draw [line width=0.8pt] (0.,0.8375346455521929)-- (1.313463275194414,0.8375346455521925);
\draw [line width=0.8pt] (1.313463275194414,0.8375346455521925)-- (1.3134632751944144,2.1509979207466072);
\draw [line width=0.8pt] (4.,1.849002079253393)-- (3.5419723094289957,1.8490020792533932);
\draw [line width=0.8pt] (3.5419723094289957,1.8490020792533932)-- (3.5419723094289957,1.3909743886823909);
\draw [line width=0.8pt] (3.5419723094289957,1.3909743886823909)-- (4.,1.3909743886823907);
\draw [line width=0.8pt] (4.,1.3909743886823907)-- (4.,1.849002079253393);
\draw [line width=0.8pt] (3.5419723094289957,1.8490020792533932)-- (2.6218324672246793,1.8490020792533939);
\draw [line width=0.8pt] (2.6218324672246793,1.8490020792533939)-- (2.621832467224679,0.9288622370490778);
\draw [line width=0.8pt] (2.621832467224679,0.9288622370490778)-- (3.5419723094289948,0.928862237049077);
\draw [line width=0.8pt] (3.5419723094289948,0.928862237049077)-- (3.5419723094289957,1.8490020792533932);
\draw [line width=0.8pt] (2.6218324672246793,1.8490020792533939)-- (1.849002079253392,1.8490020792533943);
\draw [line width=0.8pt] (1.849002079253392,1.8490020792533943)-- (1.8490020792533923,1.0761716912821073);
\draw [line width=0.8pt] (1.8490020792533923,1.0761716912821073)-- (2.621832467224679,1.076171691282107);
\draw [line width=0.8pt] (2.621832467224679,1.076171691282107)-- (2.6218324672246793,1.8490020792533939);
\draw [line width=0.8pt] (1.8490020792533919,1.6154591166876295)-- (1.3134632751944142,1.6154591166876298);
\draw [line width=0.8pt] (1.3134632751944142,1.6154591166876298)-- (1.3134632751944144,1.0799203126286525);
\draw [line width=0.8pt] (1.3134632751944144,1.0799203126286525)-- (1.8490020792533917,1.0799203126286523);
\draw [line width=0.8pt] (1.8490020792533917,1.0799203126286523)-- (1.8490020792533919,1.6154591166876295);
\draw [line width=0.8pt] (1.8490020792533917,1.0799203126286523)-- (1.616297016381219,1.0799203126286523);
\draw [line width=0.8pt] (1.616297016381219,1.0799203126286523)-- (1.6162970163812187,0.8472152497564795);
\draw [line width=0.8pt] (1.6162970163812187,0.8472152497564795)-- (1.8490020792533914,0.8472152497564794);
\draw [line width=0.8pt] (1.8490020792533914,0.8472152497564794)-- (1.8490020792533917,1.0799203126286523);
\draw [line width=0.8pt] (1.616297016381219,1.0799203126286523)-- (1.3134632751944144,1.0799203126286525);
\draw [line width=0.8pt] (1.3134632751944144,1.0799203126286525)-- (1.3134632751944144,0.7770865714418478);
\draw [line width=0.8pt] (1.3134632751944144,0.7770865714418478)-- (1.616297016381219,0.7770865714418478);
\draw [line width=0.8pt] (1.616297016381219,0.7770865714418478)-- (1.616297016381219,1.0799203126286523);
\draw [line width=0.8pt] (1.313463275194414,0.8375346455521925)-- (0.9774775928253696,0.8375346455521926);
\draw [line width=0.8pt] (0.9774775928253696,0.8375346455521926)-- (0.9774775928253696,0.5015489631831483);
\draw [line width=0.8pt] (0.9774775928253696,0.5015489631831483)-- (1.3134632751944137,0.5015489631831482);
\draw [line width=0.8pt] (1.3134632751944137,0.5015489631831482)-- (1.313463275194414,0.8375346455521925);
\draw [line width=0.8pt] (0.9774775928253696,0.8375346455521926)-- (0.6497142608492261,0.8375346455521927);
\draw [line width=0.8pt] (0.6497142608492261,0.8375346455521927)-- (0.6497142608492261,0.5097713135760491);
\draw [line width=0.8pt] (0.6497142608492261,0.5097713135760491)-- (0.9774775928253696,0.5097713135760491);
\draw [line width=0.8pt] (0.9774775928253696,0.5097713135760491)-- (0.9774775928253696,0.8375346455521926);
\draw [line width=0.8pt] (0.6497142608492261,0.8375346455521927)-- (0.32850619551260535,0.8375346455521928);
\draw [line width=0.8pt] (0.32850619551260535,0.8375346455521928)-- (0.3285061955126053,0.5163265802155722);
\draw [line width=0.8pt] (0.3285061955126053,0.5163265802155722)-- (0.6497142608492259,0.516326580215572);
\draw [line width=0.8pt] (0.6497142608492259,0.516326580215572)-- (0.6497142608492261,0.8375346455521927);
\draw [line width=0.8pt] (0.32850619551260535,0.8375346455521928)-- (0.,0.8375346455521929);
\draw [line width=0.8pt] (0.,0.8375346455521929)-- (0.,0.509028450039587);
\draw [line width=0.8pt] (0.,0.509028450039587)-- (0.32850619551260524,0.5090284500395869);
\draw [line width=0.8pt] (0.32850619551260524,0.5090284500395869)-- (0.32850619551260535,0.8375346455521928);
\draw [line width=0.8pt] (1.8490020792533914,0.8472152497564794)-- (1.6162970163812187,0.8472152497564795);
\draw [line width=0.8pt] (1.6162970163812187,0.8472152497564795)-- (1.616297016381219,0.6145101868843069);
\draw [line width=0.8pt] (1.616297016381219,0.6145101868843069)-- (1.8490020792533914,0.6145101868843069);
\draw [line width=0.8pt] (1.8490020792533914,0.6145101868843069)-- (1.8490020792533914,0.8472152497564794);
\draw [line width=0.8pt] (1.616297016381219,0.7770865714418478)-- (1.4892160113937543,0.7770865714418478);
\draw [line width=0.8pt] (1.4892160113937543,0.7770865714418478)-- (1.4892160113937543,0.6500055664543831);
\draw [line width=0.8pt] (1.4892160113937543,0.6500055664543831)-- (1.616297016381219,0.6500055664543831);
\draw [line width=0.8pt] (1.616297016381219,0.6500055664543831)-- (1.616297016381219,0.7770865714418478);
\draw [line width=0.8pt] (1.4892160113937543,0.7770865714418478)-- (1.3134632751944144,0.7770865714418478);
\draw [line width=0.8pt] (1.3134632751944144,0.7770865714418478)-- (1.3134632751944144,0.6013338352425079);
\draw [line width=0.8pt] (1.3134632751944144,0.6013338352425079)-- (1.4892160113937543,0.6013338352425079);
\draw [line width=0.8pt] (1.4892160113937543,0.6013338352425079)-- (1.4892160113937543,0.7770865714418478);
\draw [line width=0.8pt] (4.,1.3909743886823907)-- (3.5419723094289957,1.3909743886823909);
\draw [line width=0.8pt] (3.5419723094289957,1.3909743886823909)-- (3.541972309428996,0.9329466981113885);
\draw [line width=0.8pt] (3.541972309428996,0.9329466981113885)-- (4.,0.9329466981113888);
\draw [line width=0.8pt] (4.,0.9329466981113888)-- (4.,1.3909743886823907);
\draw [line width=0.8pt] (3.5419723094289948,0.928862237049077)-- (3.2438189126237997,0.9288622370490772);
\draw [line width=0.8pt] (3.2438189126237997,0.9288622370490772)-- (3.2438189126237997,0.6307088402438822);
\draw [line width=0.8pt] (3.2438189126237997,0.6307088402438822)-- (3.5419723094289948,0.6307088402438821);
\draw [line width=0.8pt] (3.5419723094289948,0.6307088402438821)-- (3.5419723094289948,0.928862237049077);
\draw [line width=0.8pt] (3.2438189126237997,0.9288622370490772)-- (2.943013355541281,0.9288622370490776);
\draw [line width=0.8pt] (2.943013355541281,0.9288622370490776)-- (2.9430133555412805,0.6280566799665587);
\draw [line width=0.8pt] (2.9430133555412805,0.6280566799665587)-- (3.2438189126237993,0.6280566799665583);
\draw [line width=0.8pt] (3.2438189126237993,0.6280566799665583)-- (3.2438189126237997,0.9288622370490772);
\draw [line width=0.8pt] (2.943013355541281,0.9288622370490776)-- (2.825017921339196,0.9288622370490776);
\draw [line width=0.8pt] (2.825017921339196,0.9288622370490776)-- (2.8250179213391955,0.8108668028469925);
\draw [line width=0.8pt] (2.8250179213391955,0.8108668028469925)-- (2.9430133555412805,0.8108668028469923);
\draw [line width=0.8pt] (2.9430133555412805,0.8108668028469923)-- (2.943013355541281,0.9288622370490776);
\draw [line width=0.8pt] (2.825017921339196,0.9288622370490776)-- (2.621832467224679,0.9288622370490778);
\draw [line width=0.8pt] (2.621832467224679,0.9288622370490778)-- (2.621832467224679,0.7256767829345607);
\draw [line width=0.8pt] (2.621832467224679,0.7256767829345607)-- (2.825017921339196,0.7256767829345606);
\draw [line width=0.8pt] (2.825017921339196,0.7256767829345606)-- (2.825017921339196,0.9288622370490776);
\draw [line width=0.8pt] (4.,0.9329466981113888)-- (3.7451577635435136,0.9329466981113886);
\draw [line width=0.8pt] (3.7451577635435136,0.9329466981113886)-- (3.745157763543514,0.6781044616549046);
\draw [line width=0.8pt] (3.745157763543514,0.6781044616549046)-- (4.,0.6781044616549048);
\draw [line width=0.8pt] (4.,0.6781044616549048)-- (4.,0.9329466981113888);
\draw [line width=0.8pt] (3.7451577635435136,0.9329466981113886)-- (3.541972309428996,0.9329466981113885);
\draw [line width=0.8pt] (3.541972309428996,0.9329466981113885)-- (3.5419723094289957,0.7297612439968709);
\draw [line width=0.8pt] (3.5419723094289957,0.7297612439968709)-- (3.7451577635435136,0.7297612439968708);
\draw [line width=0.8pt] (3.7451577635435136,0.7297612439968708)-- (3.7451577635435136,0.9329466981113886);
\draw [line width=0.8pt] (2.621832467224679,1.076171691282107)-- (2.2129381466058966,1.0761716912821073);
\draw [line width=0.8pt] (2.2129381466058966,1.0761716912821073)-- (2.2129381466058966,0.6672773706633248);
\draw [line width=0.8pt] (2.2129381466058966,0.6672773706633248)-- (2.621832467224679,0.6672773706633248);
\draw [line width=0.8pt] (2.621832467224679,0.6672773706633248)-- (2.621832467224679,1.076171691282107);
\draw [line width=0.8pt] (2.2129381466058966,1.0761716912821073)-- (1.8490020792533923,1.0761716912821073);
\draw [line width=0.8pt] (1.8490020792533923,1.0761716912821073)-- (1.8490020792533919,0.712235623929603);
\draw [line width=0.8pt] (1.8490020792533919,0.712235623929603)-- (2.212938146605896,0.7122356239296028);
\draw [line width=0.8pt] (2.212938146605896,0.7122356239296028)-- (2.2129381466058966,1.0761716912821073);
\draw [line width=0.8pt] (1.616297016381219,0.6500055664543831)-- (1.4892160113937543,0.6500055664543831);
\draw [line width=0.8pt] (1.4892160113937543,0.6500055664543831)-- (1.4892160113937543,0.5229245614669185);
\draw [line width=0.8pt] (1.4892160113937543,0.5229245614669185)-- (1.616297016381219,0.5229245614669185);
\draw [line width=0.8pt] (1.616297016381219,0.5229245614669185)-- (1.616297016381219,0.6500055664543831);
\draw [line width=0.8pt] (1.8490020792533914,0.6145101868843069)-- (1.7285497690226117,0.6145101868843069);
\draw [line width=0.8pt] (1.7285497690226117,0.6145101868843069)-- (1.7285497690226115,0.4940578766535272);
\draw [line width=0.8pt] (1.7285497690226115,0.4940578766535272)-- (1.8490020792533912,0.4940578766535271);
\draw [line width=0.8pt] (1.8490020792533912,0.4940578766535271)-- (1.8490020792533914,0.6145101868843069);
\draw [line width=0.8pt] (1.7285497690226117,0.6145101868843069)-- (1.616297016381219,0.6145101868843069);
\draw [line width=0.8pt] (1.616297016381219,0.6145101868843069)-- (1.6162970163812187,0.5022574342429141);
\draw [line width=0.8pt] (1.6162970163812187,0.5022574342429141)-- (1.7285497690226115,0.502257434242914);
\draw [line width=0.8pt] (1.7285497690226115,0.502257434242914)-- (1.7285497690226117,0.6145101868843069);
\draw [line width=0.8pt] (1.4892160113937543,0.6013338352425079)-- (1.410286793452321,0.6013338352425079);
\draw [line width=0.8pt] (1.410286793452321,0.6013338352425079)-- (1.4102867934523209,0.5224046173010748);
\draw [line width=0.8pt] (1.4102867934523209,0.5224046173010748)-- (1.489216011393754,0.5224046173010747);
\draw [line width=0.8pt] (1.489216011393754,0.5224046173010747)-- (1.4892160113937543,0.6013338352425079);
\draw [line width=0.8pt] (1.410286793452321,0.6013338352425079)-- (1.3134632751944144,0.6013338352425079);
\draw [line width=0.8pt] (1.3134632751944144,0.6013338352425079)-- (1.3134632751944142,0.5045103169846012);
\draw [line width=0.8pt] (1.3134632751944142,0.5045103169846012)-- (1.4102867934523209,0.5045103169846011);
\draw [line width=0.8pt] (1.4102867934523209,0.5045103169846011)-- (1.410286793452321,0.6013338352425079);
\draw [line width=0.8pt] (2.2129381466058966,0.7122356239296029)-- (2.035385045419575,0.7122356239296029);
\draw [line width=0.8pt] (2.035385045419575,0.7122356239296029)-- (2.0353850454195754,0.5346825227432812);
\draw [line width=0.8pt] (2.0353850454195754,0.5346825227432812)-- (2.212938146605897,0.5346825227432817);
\draw [line width=0.8pt] (2.212938146605897,0.5346825227432817)-- (2.2129381466058966,0.7122356239296029);
\draw [line width=0.8pt] (2.035385045419575,0.7122356239296029)-- (1.8490020792533919,0.7122356239296033);
\draw [line width=0.8pt] (1.8490020792533919,0.7122356239296033)-- (1.8490020792533914,0.52585265776342);
\draw [line width=0.8pt] (1.8490020792533914,0.52585265776342)-- (2.0353850454195745,0.5258526577634197);
\draw [line width=0.8pt] (2.0353850454195745,0.5258526577634197)-- (2.035385045419575,0.7122356239296029);
\draw [line width=0.8pt] (2.6218324672246784,0.6672773706633245)-- (2.490019429773477,0.6672773706633248);
\draw [line width=0.8pt] (2.490019429773477,0.6672773706633248)-- (2.4900194297734766,0.5354643332121234);
\draw [line width=0.8pt] (2.4900194297734766,0.5354643332121234)-- (2.621832467224678,0.535464333212123);
\draw [line width=0.8pt] (2.621832467224678,0.535464333212123)-- (2.6218324672246784,0.6672773706633245);
\draw [line width=0.8pt] (2.490019429773477,0.6672773706633248)-- (2.3508338750390565,0.6672773706633248);
\draw [line width=0.8pt] (2.3508338750390565,0.6672773706633248)-- (2.3508338750390565,0.5280918159289043);
\draw [line width=0.8pt] (2.3508338750390565,0.5280918159289043)-- (2.490019429773477,0.5280918159289043);
\draw [line width=0.8pt] (2.490019429773477,0.5280918159289043)-- (2.490019429773477,0.6672773706633248);
\draw [line width=0.8pt] (2.3508338750390565,0.6672773706633248)-- (2.212938146605896,0.6672773706633245);
\draw [line width=0.8pt] (2.212938146605896,0.6672773706633245)-- (2.212938146605896,0.529381642230164);
\draw [line width=0.8pt] (2.212938146605896,0.529381642230164)-- (2.3508338750390565,0.5293816422301643);
\draw [line width=0.8pt] (2.3508338750390565,0.5293816422301643)-- (2.3508338750390565,0.6672773706633248);
\draw [line width=0.8pt] (2.8250179213391977,0.7256767829345606)-- (2.621832467224678,0.7256767829345608);
\draw [line width=0.8pt] (2.621832467224678,0.7256767829345608)-- (2.621832467224678,0.5224913288200411);
\draw [line width=0.8pt] (2.621832467224678,0.5224913288200411)-- (2.8250179213391977,0.5224913288200411);
\draw [line width=0.8pt] (2.8250179213391977,0.5224913288200411)-- (2.8250179213391977,0.7256767829345606);
\draw [line width=0.8pt] (2.943013355541281,0.810866802846994)-- (2.8250179213391946,0.8108668028469942);
\draw [line width=0.8pt] (2.8250179213391946,0.8108668028469942)-- (2.825017921339195,0.6928713686449077);
\draw [line width=0.8pt] (2.825017921339195,0.6928713686449077)-- (2.943013355541281,0.692871368644908);
\draw [line width=0.8pt] (2.943013355541281,0.692871368644908)-- (2.943013355541281,0.810866802846994);
\draw [line width=0.8pt] (2.943013355541281,0.692871368644908)-- (2.825017921339195,0.6928713686449077);
\draw [line width=0.8pt] (2.825017921339195,0.6928713686449077)-- (2.8250179213391955,0.5748759344428218);
\draw [line width=0.8pt] (2.8250179213391955,0.5748759344428218)-- (2.943013355541281,0.5748759344428223);
\draw [line width=0.8pt] (2.943013355541281,0.5748759344428223)-- (2.943013355541281,0.692871368644908);
\draw [line width=0.8pt] (3.2438189126237993,0.6280566799665583)-- (3.1418629537083134,0.6280566799665585);
\draw [line width=0.8pt] (3.1418629537083134,0.6280566799665585)-- (3.1418629537083134,0.5261007210510723);
\draw [line width=0.8pt] (3.1418629537083134,0.5261007210510723)-- (3.2438189126237993,0.5261007210510723);
\draw [line width=0.8pt] (3.2438189126237993,0.5261007210510723)-- (3.2438189126237993,0.6280566799665583);
\draw [line width=0.8pt] (3.1418629537083134,0.6280566799665585)-- (3.0460861438180094,0.6280566799665586);
\draw [line width=0.8pt] (3.0460861438180094,0.6280566799665586)-- (3.0460861438180094,0.5322798700762547);
\draw [line width=0.8pt] (3.0460861438180094,0.5322798700762547)-- (3.1418629537083134,0.5322798700762545);
\draw [line width=0.8pt] (3.1418629537083134,0.5322798700762545)-- (3.1418629537083134,0.6280566799665585);
\draw [line width=0.8pt] (3.0460861438180094,0.6280566799665586)-- (2.943013355541281,0.6280566799665588);
\draw [line width=0.8pt] (2.943013355541281,0.6280566799665588)-- (2.943013355541281,0.5249838916898304);
\draw [line width=0.8pt] (2.943013355541281,0.5249838916898304)-- (3.0460861438180094,0.5249838916898303);
\draw [line width=0.8pt] (3.0460861438180094,0.5249838916898303)-- (3.0460861438180094,0.6280566799665586);
\draw [line width=0.8pt] (2.943013355541281,0.5748759344428223)-- (2.8859463279440996,0.574875934442822);
\draw [line width=0.8pt] (2.8859463279440996,0.574875934442822)-- (2.8859463279440996,0.5178089068456407);
\draw [line width=0.8pt] (2.8859463279440996,0.5178089068456407)-- (2.943013355541281,0.5178089068456407);
\draw [line width=0.8pt] (2.943013355541281,0.5178089068456407)-- (2.943013355541281,0.5748759344428223);
\draw [line width=0.8pt] (2.8859463279440996,0.574875934442822)-- (2.8250179213391955,0.5748759344428218);
\draw [line width=0.8pt] (2.8250179213391955,0.5748759344428218)-- (2.8250179213391955,0.5139475278379178);
\draw [line width=0.8pt] (2.8250179213391955,0.5139475278379178)-- (2.8859463279440996,0.5139475278379179);
\draw [line width=0.8pt] (2.8859463279440996,0.5139475278379179)-- (2.8859463279440996,0.574875934442822);
\draw [line width=0.8pt] (3.5419723094289943,0.6307088402438834)-- (3.4390755109878297,0.6307088402438821);
\draw [line width=0.8pt] (3.4390755109878297,0.6307088402438821)-- (3.4390755109878315,0.5278120418027176);
\draw [line width=0.8pt] (3.4390755109878315,0.5278120418027176)-- (3.5419723094289957,0.527812041802719);
\draw [line width=0.8pt] (3.5419723094289957,0.527812041802719)-- (3.5419723094289943,0.6307088402438834);
\draw [line width=0.8pt] (3.4390755109878297,0.6307088402438821)-- (3.340616716797701,0.6307088402438822);
\draw [line width=0.8pt] (3.340616716797701,0.6307088402438822)-- (3.340616716797701,0.5322500460537534);
\draw [line width=0.8pt] (3.340616716797701,0.5322500460537534)-- (3.4390755109878297,0.5322500460537534);
\draw [line width=0.8pt] (3.4390755109878297,0.5322500460537534)-- (3.4390755109878297,0.6307088402438821);
\draw [line width=0.8pt] (3.340616716797701,0.6307088402438822)-- (3.2438189126238,0.6307088402438836);
\draw [line width=0.8pt] (3.2438189126238,0.6307088402438836)-- (3.243818912623798,0.5339110360699829);
\draw [line width=0.8pt] (3.243818912623798,0.5339110360699829)-- (3.3406167167976992,0.533911036069981);
\draw [line width=0.8pt] (3.3406167167976992,0.533911036069981)-- (3.340616716797701,0.6307088402438822);
\draw [line width=0.8pt] (4.,0.678104461654905)-- (3.8641663866677303,0.6781044616549047);
\draw [line width=0.8pt] (3.8641663866677303,0.6781044616549047)-- (3.864166386667731,0.5422708483226394);
\draw [line width=0.8pt] (3.864166386667731,0.5422708483226394)-- (4.,0.5422708483226398);
\draw [line width=0.8pt] (4.,0.5422708483226398)-- (4.,0.678104461654905);
\draw [line width=0.8pt] (3.8641663866677303,0.6781044616549047)-- (3.745157763543514,0.6781044616549048);
\draw [line width=0.8pt] (3.745157763543514,0.6781044616549048)-- (3.745157763543514,0.5590958385306887);
\draw [line width=0.8pt] (3.745157763543514,0.5590958385306887)-- (3.8641663866677303,0.5590958385306886);
\draw [line width=0.8pt] (3.8641663866677303,0.5590958385306886)-- (3.8641663866677303,0.6781044616549047);
\draw [line width=0.8pt] (3.745157763543515,0.7297612439968706)-- (3.644100369671894,0.7297612439968708);
\draw [line width=0.8pt] (3.644100369671894,0.7297612439968708)-- (3.644100369671894,0.6287038501252499);
\draw [line width=0.8pt] (3.644100369671894,0.6287038501252499)-- (3.745157763543515,0.6287038501252498);
\draw [line width=0.8pt] (3.745157763543515,0.6287038501252498)-- (3.745157763543515,0.7297612439968706);
\draw [line width=0.8pt] (3.644100369671894,0.7297612439968708)-- (3.5419723094289948,0.7312034070612733);
\draw [line width=0.8pt] (3.5419723094289948,0.7312034070612733)-- (3.540530146364592,0.6290753468183741);
\draw [line width=0.8pt] (3.540530146364592,0.6290753468183741)-- (3.642658206607491,0.6276331837539715);
\draw [line width=0.8pt] (3.642658206607491,0.6276331837539715)-- (3.644100369671894,0.7297612439968708);
\draw [line width=0.8pt] (3.642658206607491,0.6276331837539715)-- (3.5915095592080015,0.6283554601772997);
\draw [line width=0.8pt] (3.5915095592080015,0.6283554601772997)-- (3.5907872827846727,0.5772068127778103);
\draw [line width=0.8pt] (3.5907872827846727,0.5772068127778103)-- (3.6419359301841623,0.5764845363544817);
\draw [line width=0.8pt] (3.6419359301841623,0.5764845363544817)-- (3.642658206607491,0.6276331837539715);
\draw [line width=0.8pt] (3.5915095592080015,0.6283554601772997)-- (3.540530146364592,0.6290753468183741);
\draw [line width=0.8pt] (3.540530146364592,0.6290753468183741)-- (3.539810259723517,0.5780959339749644);
\draw [line width=0.8pt] (3.539810259723517,0.5780959339749644)-- (3.590789672566927,0.5773760473338898);
\draw [line width=0.8pt] (3.590789672566927,0.5773760473338898)-- (3.5915095592080015,0.6283554601772997);
\draw [line width=0.8pt] (3.745157763543515,0.6287038501252498)-- (3.6938550656876656,0.6287038501252499);
\draw [line width=0.8pt] (3.6938550656876656,0.6287038501252499)-- (3.693855065687666,0.5774011522694005);
\draw [line width=0.8pt] (3.693855065687666,0.5774011522694005)-- (3.745157763543515,0.5774011522694007);
\draw [line width=0.8pt] (3.745157763543515,0.5774011522694007)-- (3.745157763543515,0.6287038501252498);
\draw [line width=0.8pt] (3.6938550656876656,0.6287038501252499)-- (3.644100369671894,0.6287038501252499);
\draw [line width=0.8pt] (3.644100369671894,0.6287038501252499)-- (3.6441003696718934,0.5789491541094781);
\draw [line width=0.8pt] (3.6441003696718934,0.5789491541094781)-- (3.693855065687665,0.5789491541094779);
\draw [line width=0.8pt] (3.693855065687665,0.5789491541094779)-- (3.6938550656876656,0.6287038501252499);
\draw [line width=0.8pt] (3.8641663866677303,0.5590958385306886)-- (3.8107596057251354,0.5590958385306886);
\draw [line width=0.8pt] (3.8107596057251354,0.5590958385306886)-- (3.8107596057251354,0.5056890575880937);
\draw [line width=0.8pt] (3.8107596057251354,0.5056890575880937)-- (3.8641663866677303,0.5056890575880937);
\draw [line width=0.8pt] (3.8641663866677303,0.5056890575880937)-- (3.8641663866677303,0.5590958385306886);
\draw [line width=0.8pt] (3.8107596057251354,0.5590958385306886)-- (3.745157763543514,0.5590958385306887);
\draw [line width=0.8pt] (3.745157763543514,0.5590958385306887)-- (3.745157763543514,0.4934939963490672);
\draw [line width=0.8pt] (3.745157763543514,0.4934939963490672)-- (3.8107596057251354,0.4934939963490672);
\draw [line width=0.8pt] (3.8107596057251354,0.4934939963490672)-- (3.8107596057251354,0.5590958385306886);
\draw [line width=0.8pt] (3.745157763543515,0.5774011522694007)-- (3.693855065687666,0.5774011522694005);
\draw [line width=0.8pt] (3.693855065687666,0.5774011522694005)-- (3.693855065687667,0.5260984544135516);
\draw [line width=0.8pt] (3.693855065687667,0.5260984544135516)-- (3.7451577635435154,0.5260984544135523);
\draw [line width=0.8pt] (3.7451577635435154,0.5260984544135523)-- (3.745157763543515,0.5774011522694007);
\draw [line width=0.8pt] (4.,0.5422708483226398)-- (3.9545031021064716,0.5422708483226397);
\draw [line width=0.8pt] (3.9545031021064716,0.5422708483226397)-- (3.9545031021064707,0.49677395042911576);
\draw [line width=0.8pt] (3.9545031021064707,0.49677395042911576)-- (4.,0.4967739504291153);
\draw [line width=0.8pt] (4.,0.4967739504291153)-- (4.,0.5422708483226398);
\draw [line width=0.8pt] (3.9545031021064716,0.5422708483226397)-- (3.909014905504744,0.5422708483226395);
\draw [line width=0.8pt] (3.909014905504744,0.5422708483226395)-- (3.909014905504744,0.49678265172091196);
\draw [line width=0.8pt] (3.909014905504744,0.49678265172091196)-- (3.9545031021064716,0.49678265172091207);
\draw [line width=0.8pt] (3.9545031021064716,0.49678265172091207)-- (3.9545031021064716,0.5422708483226397);
\draw [line width=0.8pt] (3.909014905504744,0.5422708483226395)-- (3.864166386667731,0.5422708483226394);
\draw [line width=0.8pt] (3.864166386667731,0.5422708483226394)-- (3.8641663866677316,0.4974223294856266);
\draw [line width=0.8pt] (3.8641663866677316,0.4974223294856266)-- (3.909014905504744,0.4974223294856268);
\draw [line width=0.8pt] (3.909014905504744,0.4974223294856268)-- (3.909014905504744,0.5422708483226395);
\draw [line width=0.8pt] (3.693855065687665,0.5789491541094779)-- (3.6441003696718934,0.5789491541094781);
\draw [line width=0.8pt] (3.6441003696718934,0.5789491541094781)-- (3.644100369671894,0.5291944580937064);
\draw [line width=0.8pt] (3.644100369671894,0.5291944580937064)-- (3.693855065687665,0.5291944580937065);
\draw [line width=0.8pt] (3.693855065687665,0.5291944580937065)-- (3.693855065687665,0.5789491541094779);
\draw [line width=0.8pt] (3.6419359301841623,0.5764845363544817)-- (3.5907872827846727,0.5772068127778103);
\draw [line width=0.8pt] (3.5907872827846727,0.5772068127778103)-- (3.590065006361345,0.5260581653783207);
\draw [line width=0.8pt] (3.590065006361345,0.5260581653783207)-- (3.641213653760834,0.5253358889549925);
\draw [line width=0.8pt] (3.641213653760834,0.5253358889549925)-- (3.6419359301841623,0.5764845363544817);
\draw [line width=0.8pt] (3.5907872827846727,0.5772068127778103)-- (3.539810259723517,0.5780959339749644);
\draw [line width=0.8pt] (3.539810259723517,0.5780959339749644)-- (3.538921138526362,0.5271189109138086);
\draw [line width=0.8pt] (3.538921138526362,0.5271189109138086)-- (3.5898981615875183,0.5262297897166541);
\draw [line width=0.8pt] (3.5898981615875183,0.5262297897166541)-- (3.5907872827846727,0.5772068127778103);
\draw [line width=0.8pt,color=black] (2.,6.)-- (0.9245010396266959,3.0754989603733036);
\draw [line width=0.8pt,color=black] (2.,6.)-- (2.924501039626696,2.924501039626697);
\draw [line width=0.8pt,color=black] (0.9245010396266959,3.0754989603733036)-- (0.656731637597207,1.4942662831494);
\draw [line width=0.8pt,color=black] (0.9245010396266959,3.0754989603733036)-- (1.5812326772239031,1.8832285187171185);
\draw [line width=0.8pt,color=black] (1.5812326772239031,1.8832285187171185)-- (1.5812326772239031,1.347689714658141);
\draw [line width=0.8pt,color=black] (1.5812326772239031,1.347689714658141)-- (1.4648801457878167,0.92850344203525);
\draw [line width=0.8pt,color=black] (1.5812326772239031,1.347689714658141)-- (1.7326495478173052,0.9635677811925659);
\draw [line width=0.8pt,color=black] (0.656731637597207,1.4942662831494)-- (0.16425309775630234,0.6732815477958899);
\draw [line width=0.8pt,color=black] (0.656731637597207,1.4942662831494)-- (0.4891102281809157,0.6769306128838825);
\draw [line width=0.8pt,color=black] (0.656731637597207,1.4942662831494)-- (0.8135959268372979,0.6736529795641208);
\draw [line width=0.8pt,color=black] (0.656731637597207,1.4942662831494)-- (1.1454704340098918,0.6695418043676704);
\draw [line width=0.8pt,color=black] (2.924501039626696,2.924501039626697)-- (2.235417273239036,1.4625868852677506);
\draw [line width=0.8pt,color=black] (2.924501039626696,2.924501039626697)-- (3.0819023883268373,1.3889321581512355);
\draw [line width=0.8pt,color=black] (2.924501039626696,2.924501039626697)-- (3.770986154714497,1.619988233967892);
\draw [line width=0.8pt,color=black] (3.770986154714497,1.619988233967892)-- (3.770986154714497,1.1619605433968896);
\draw [line width=0.8pt,color=black] (1.4648801457878167,0.92850344203525)-- (1.4013396432940843,0.6892102033421779);
\draw [line width=0.8pt,color=black] (1.4013396432940843,0.6892102033421779)-- (1.3618750343233677,0.5529220761135546);
\draw [line width=0.8pt,color=black] (1.4013396432940843,0.6892102033421779)-- (1.4497514024230376,0.5618692262717914);
\draw [line width=0.8pt,color=black] (1.4648801457878167,0.92850344203525)-- (1.5527565138874866,0.7135460689481155);
\draw [line width=0.8pt,color=black] (1.5527565138874866,0.7135460689481155)-- (1.5527565138874866,0.5864650639606508);
\draw [line width=0.8pt,color=black] (1.7326495478173052,0.9635677811925659)-- (1.7326495478173052,0.7308627183203932);
\draw [line width=0.8pt,color=black] (1.7326495478173052,0.7308627183203932)-- (1.6724233927019152,0.5583838105636105);
\draw [line width=0.8pt,color=black] (1.7326495478173052,0.7308627183203932)-- (1.7887759241380015,0.554284031768917);
\draw [line width=0.8pt,color=black] (2.235417273239036,1.4625868852677506)-- (2.0309701129296442,0.8942036576058552);
\draw [line width=0.8pt,color=black] (2.235417273239036,1.4625868852677506)-- (2.4173853069152877,0.871724530972716);
\draw [line width=0.8pt,color=black] (2.0309701129296442,0.8942036576058552)-- (1.9421935623364832,0.6190441408465115);
\draw [line width=0.8pt,color=black] (2.0309701129296442,0.8942036576058552)-- (2.124161596012736,0.6234590733364421);
\draw [line width=0.8pt,color=black] (2.4173853069152877,0.871724530972716)-- (2.2818860108224763,0.5983295064467444);
\draw [line width=0.8pt,color=black] (2.4173853069152877,0.871724530972716)-- (2.4204266524062668,0.5976845932961146);
\draw [line width=0.8pt,color=black] (2.4173853069152877,0.871724530972716)-- (2.5559259484990777,0.6013708519377239);
\draw [line width=0.8pt,color=black] (3.0819023883268373,1.3889321581512355)-- (2.7234251942819374,0.8272695099918191);
\draw [line width=0.8pt,color=black] (3.0819023883268373,1.3889321581512355)-- (2.8840156384402382,0.869864519948035);
\draw [line width=0.8pt,color=black] (3.0819023883268373,1.3889321581512355)-- (3.09341613408254,0.778459458507818);
\draw [line width=0.8pt,color=black] (3.0819023883268373,1.3889321581512355)-- (3.3928956110263973,0.7797855386464796);
\draw [line width=0.8pt,color=black] (2.7234251942819374,0.8272695099918191)-- (2.723425194281938,0.6240840558773009);
\draw [line width=0.8pt,color=black] (2.8840156384402382,0.869864519948035)-- (2.884015638440238,0.7518690857459509);
\draw [line width=0.8pt,color=black] (2.884015638440238,0.7518690857459509)-- (2.8840156384402382,0.6338736515438649);
\draw [line width=0.8pt,color=black] (2.8840156384402382,0.6338736515438649)-- (2.8554821246416475,0.5444117311403699);
\draw [line width=0.8pt,color=black] (2.8840156384402382,0.6338736515438649)-- (2.9144798417426903,0.5463424206442314);
\draw [line width=0.8pt,color=black] (3.09341613408254,0.778459458507818)-- (2.994549749679645,0.5765202858281945);
\draw [line width=0.8pt,color=black] (3.09341613408254,0.778459458507818)-- (3.0939745487631614,0.5801682750214066);
\draw [line width=0.8pt,color=black] (3.09341613408254,0.778459458507818)-- (3.1928409331660563,0.5770787005088154);
\draw [line width=0.8pt,color=black] (3.3928956110263973,0.7797855386464796)-- (3.2922178147107495,0.5823099381569325);
\draw [line width=0.8pt,color=black] (3.3928956110263973,0.7797855386464796)-- (3.3898461138927654,0.5814794431488177);
\draw [line width=0.8pt,color=black] (3.3928956110263973,0.7797855386464796)-- (3.4905239102084127,0.5792604410233004);
\draw [line width=0.8pt,color=black] (3.770986154714497,1.1619605433968896)-- (3.6435650364862546,0.8313539710541298);
\draw [line width=0.8pt,color=black] (3.770986154714497,1.1619605433968896)-- (3.872578881771756,0.8055255798831467);
\draw [line width=0.8pt,color=black] (3.6435650364862546,0.8313539710541298)-- (3.592315258018243,0.6794182954076224);
\draw [line width=0.8pt,color=black] (3.6435650364862546,0.8313539710541298)-- (3.694629066607704,0.6792325470610602);
\draw [line width=0.8pt,color=black] (3.872578881771756,0.8055255798831467)-- (3.804662075105622,0.6186001500927967);
\draw [line width=0.8pt,color=black] (3.872578881771756,0.8055255798831467)-- (3.932083193333863,0.6101876549887721);
\draw [line width=0.8pt,color=black] (3.932083193333863,0.6101876549887721)-- (3.9772515510532336,0.5195223993758777);
\draw [line width=0.8pt,color=black] (3.932083193333863,0.6101876549887721)-- (3.931759003805608,0.5195267500217758);
\draw [line width=0.8pt,color=black] (3.932083193333863,0.6101876549887721)-- (3.886590646086238,0.519846588904133);
\draw [line width=0.8pt,color=black] (3.804662075105622,0.6186001500927967)-- (3.837462996196433,0.5323924480593911);
\draw [line width=0.8pt,color=black] (3.804662075105622,0.6186001500927967)-- (3.7779586846343247,0.5262949174398779);
\draw [line width=0.8pt,color=black] (3.694629066607704,0.6792325470610602)-- (3.7195064146155903,0.6030525011973251);
\draw [line width=0.8pt,color=black] (3.7195064146155903,0.6030525011973251)-- (3.7195064146155907,0.5517498033414762);
\draw [line width=0.8pt,color=black] (3.694629066607704,0.6792325470610602)-- (3.6689777176797795,0.603826502117364);
\draw [line width=0.8pt,color=black] (3.6689777176797795,0.603826502117364)-- (3.6689777176797795,0.5540718061015921);
\draw [line width=0.8pt,color=black] (3.592315258018243,0.6794182954076224)-- (3.5656599094657593,0.603225697076132);
\draw [line width=0.8pt,color=black] (3.592315258018243,0.6794182954076224)-- (3.616722744696082,0.6024199982658909);
\draw [line width=0.8pt,color=black] (3.616722744696082,0.6024199982658909)-- (3.6160004682727536,0.5512713508664012);
\draw [line width=0.8pt,color=black] (3.5656599094657593,0.603225697076132)-- (3.5648542106555174,0.5521628618458094);
\draw [line width=0.8pt,color=black] (2.,8.)-- (2.,6.);

\draw [line width=2.pt,color=black] (4.,8.)-- (0.,8.);
\draw [line width=2.pt,color=black] (0.,8.)-- (0.004368420828023068,0.5076994075353054);
\draw [line width=2.pt,color=black] (0.004368420828023068,0.5076994075353054)-- (4.007847529220136,0.501168446510196);
\draw [line width=2.pt,color=black] (4.007847529220136,0.501168446510196)-- (4.,8.);
\draw [color=black](-0.002668709472009345,8.64) node[anchor=north west] {$I^\ast\mu(\omega)=\mu^A(\partial T)=c_2(A)$};
\draw [color=black](2,7.25) node[anchor=north west] {$\omega$};
\draw [color=black](-0.5050465391305167,4.349537230555931) node[anchor=north west] {$1$};

\draw (-0.03481734532263382,3.85) node[anchor=north west] {$Q_\alpha$};
\draw [color=black](1.02,5) node[anchor=north west] {$\alpha$};

\draw (-0.03481734532263382,2) node[anchor=north west] {$Q_\beta$};
\draw [color=black](0.3,2.7331243768413462) node[anchor=north west] {$\beta$};

\end{tikzpicture}
\columnbreak

\begin{thm}\label{main 2}
(i) Let $T=(V,E)$ be a rooted tree and $\mu^A$ be the equilibrium measure (for $p=2$) for a set $A\subseteq\partial T$.

 Then, there exists a square tiling $\lbrace Q_\alpha\rbrace_{\alpha\in E}$ of the rectangle $R=[0,c_2(A)]\times[0,1]$, 
 where the combinatorics of the tiling are prescribed by $T$ and the square $Q_\alpha$ has side of 
length $\ell(\alpha)=\mu^A(\partial T_{\alpha})$.

(ii) Conversely, suppose a rectangle $R=[0,c]\times[0,1]$, $c\leq 1$, is square-tiled by $\lbrace Q_\alpha\rbrace_\alpha$ with combinatorics given by a rooted tree $T$.

Then there exists an $F_\sigma$ subset $A$ of $\partial T$ such that the measure $\mu(\partial T_{\alpha})=\ell(\alpha)$, 
 is the equilibrium measure of $A$, and then $c_2(A)=c$.
\end{thm}
\end{multicols}
A consequence of this theorem is that the  different tilings having the same tree-combinatorics can be parametrized by a family of $F_\sigma$ subsets of $\partial T$. 


Theorem \ref{main 2} is very much related to results in \cite{benjamini}, \cite{schramm}, \cite{georgakopoulos}. Benjamini and Schramm proved that the equilibrium measure of a planar graph's boundary is associated to a tiling of a 
cylinder by squares, and on trees, analytically, this is the content of \eqref{equilibrium equation}. The converse statement is not true: there are tilings of cylinders whose combinatorics are prescribed by a planar graph, in which the sizes of the tiles do not reflect the 
equilibrium measure of the graph's boundary; trees provide plenty of counterexamples. Theorem \ref{main 2} provides, in the special case of trees, the correct bijection between tilings and discrete potential theoretic objects. It would be very interesting having a generalization of this statement to the whole class of planar graphs.

Again in the linear case, Theorem \ref{main 1} is related to some beautiful theorems of Kai-Lai Chung 
(see \cite{chung1973}, \cite{chung2008}), interpreting equilibrium measures in terms of last exit times for stochastic 
processes. The results of Chung, and of Benjamini and Schramm, have a probabilistic statement, or proof. 
As we work in the nonlinear case $1<p<\infty$, we do not expect probabilistic methods to apply here.
Even in the linear case, however, it would be interesting having a converse to Benjamini and Schramm's theorem on planar graphs, and this might have
a probabilistic proof. See the monographs \cite{woess2009}, \cite{lyons} and \cite{soardi} for thorough introductions to the stochastic 
processes which are here relevant.

The paper is organized as follows. Section \ref{SecPotential} is devoted to present some preliminary results of Potential Analysis on trees. Section \ref{SecRescaling} contains the proof of (i) in Theorem \ref{main 1}, which follows quite easily from some rescaling properties we present. In Section \ref{SecFlows} we provide some useful reinterpretation of equation \eqref{equilibrium equation} in terms of edge functions and of measures. The proof of part (ii) of our characterization is more subtle and is given in Section \ref{SecSufficient}. In Section \ref{SecTiling} we discuss some new results on square tilings of rectangles which follow from Theorem \ref{main 1} with $p=2$. Finally, in Section \ref{SecBranching} we show how capacities can be expressed by a recursive formula involving branched continued fractions, and we exploit this fact to give another reformulation of Theorem \ref{main 1}.

It is a pleasure to acknowledge useful discussions with Davide Cordella and Nikolaos Chalmoukis.

\section{Potential Theory on the tree}\label{SecPotential}


Let $T$ be a tree with root vertex $o$ and root edge $\omega$, as described in the Introduction. A vertex is said to be a \textit{leaf} if it is the endpoint of one edge only. Being mainly interested in infinite trees, we will assume that $T$ has no leaves except $o$; this is a simplification but not a restriction. Given $\alpha \in E$, we write $b(\alpha)$ and $e(\alpha)$ for its beginning and ending vertex, respectively, i.e., they are connected by $\alpha$ and $b(\alpha)<e(\alpha)$. We denote by $|\alpha|$ the \textit{level} of an edge $\alpha$, which is the number of edges preceding $\alpha$ in the geodesic to the root $\omega$. With this definition we have $|\omega|=0$. The level of a vertex is the level of the subsequent edge, $|b(\alpha)|=|\alpha|$. We define the \textit{sons} of an edge $\alpha$ as the elements of the set $s(\alpha)=\lbrace\beta\in E: \ \beta\sim\alpha, \ |\beta|=|\alpha|+1\rbrace$. Given an edge $\alpha$, we write $T_\alpha$ for the subtree rooted at $\alpha$ having as edge set $\lbrace \beta\in E: \beta\geq\alpha \rbrace$, and we call it the $\alpha$-\textit{tent}. The boundary $\partial T_\alpha$ of the $\alpha$-tent is exactly the set of labels corresponding to rays passing through $\alpha$. We put on $\partial T$ the topology generated by the boundary of tents, namely the one having as a basis $\lbrace\partial T_\alpha\rbrace_{\alpha\in E}$.

We say that $S$ is a subtree of $T$ if it is a tree having the same root as $T$ and having as vertex set a subset of $V$. Observe that subtrees are exactly those subsets that can be obtained subtracting from $T$ a countable union of tents.

It is clear that the standard counting metric $\#$ does not extend properly to the boundary of an infinite tree. We enodow $\overline{V}$ with a different metric: given two points $\xi,\eta\in \overline{V}$, we define their \textit{confluent} to be the vertex $\xi\wedge\eta=\max \lbrace x\in V: x\leq \xi, x\leq \eta\rbrace$. Then, we define the \textit{visual metric}
\begin{equation}\label{gromovmetric}
d(\xi,\eta)=e^{-|\xi\wedge\eta|}.
\end{equation}
The reader familiar with Gromov's theory of hyperbolic spaces can observe that for $\xi,\eta\in V$, expression (\ref{gromovmetric}) coincides with the Gromov product on $\left(V,\#\right)$, given by
\begin{equation*}
  (x,y)_o=\frac{1}{2}\left(\#(x,o)+\#(y,o)-\#(x,y)\right), \quad x,y\in V.
\end{equation*}
In fact, one can extend such a product to points in $\partial T$ by setting $(\xi,\eta)_o=\lim(x,y)_o$, where the limit is taken for $x\to\xi, y\to\eta$ along the rays labeled by $\xi$ and $\eta$. With this notation we have $d(\xi,\eta)=(\xi,\eta)_o$.
It is not hard to see that (\ref{gromovmetric}) defines a distance on $\overline{V}$, which in fact is an ultrametric, and that $\left(\overline{V},d\right)$ is a compact metric space (see, for example, \cite[p. 121]{soardi}). Moreover, the topology induced by this metric on $\partial T$ is the tent topology introduced before.

There is a one-to-one correspondence between compact sets in $\partial T$ and boundaries of subtrees, in the following sense.

\begin{prop}\label{prop:tree with prescribed compact boundary}
A set $K\subseteq\partial T$ is compact if and only if there exists a subtree $T_K\subseteq T$ such that $K=\partial T_K$.
\end{prop}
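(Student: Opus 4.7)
The plan is to handle the two directions separately, the backward implication being immediate from what has just been recalled, and the forward implication requiring the construction of a canonical subtree whose boundary is $K$.

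For the ``if'' direction, suppose $K=\partial T_K$ for a subtree $T_K\subseteq T$. Since $T_K$ is itself a rooted, locally finite tree (locally finite because $T$ is, and having the same root as $T$), the general fact already stated in the section, that the boundary of such a tree is compact in its natural topology, applies. The remaining point to check is that the topology $\partial T_K$ inherits as a subspace of $\partial T$ coincides with its own natural topology; this is clear, because the basic open sets $\partial (T_K)_\alpha$ with $\alpha\in E(T_K)$ are precisely the intersections $\partial T_\alpha \cap \partial T_K$, so the inclusion $\partial T_K\hookrightarrow\partial T$ is a homeomorphism onto its image.

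For the ``only if'' direction, given $K$ compact, define
\begin{equation*}
E(T_K):=\{\alpha\in E(T):\ \partial T_\alpha\cap K\neq\emptyset\},
\end{equation*}
and let $V(T_K)$ consist of the root $o$ together with the endpoints of the edges in $E(T_K)$. I would first verify that this really is a subtree: $\omega\in E(T_K)$ because $K\subseteq \partial T=\partial T_\omega$ (assuming $K\neq\emptyset$; the empty case corresponds to the trivial subtree $\{o,\omega\}$ with no descendants, or one may treat it separately), and if $\beta\in E(T_K)$ and $\alpha\leq\beta$ then $\partial T_\beta\subseteq\partial T_\alpha$ gives $\alpha\in E(T_K)$, so $T_K$ is connected and rooted at $o$.

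It then remains to show $\partial T_K = K$. The inclusion $K\subseteq\partial T_K$ is direct: every $\zeta\in K$ satisfies $\zeta\in\partial T_\alpha\cap K$ for each $\alpha\in P(\zeta)$, so the whole geodesic $P(\zeta)$ lies in $E(T_K)$, hence $\zeta\in\partial T_K$. The reverse inclusion $\partial T_K\subseteq K$ is the place where compactness (equivalently, closedness in the Hausdorff space $\partial T$) is used: for $\zeta\in\partial T_K$ and each $\alpha\in P(\zeta)$ one can pick $\zeta_\alpha\in \partial T_\alpha\cap K$, and since $|\zeta_\alpha\wedge\zeta|\geq|\alpha|$ one has $\rho(\zeta_\alpha,\zeta)\leq e^{-|\alpha|}\to 0$ as we travel along $P(\zeta)$ away from the root; thus $\zeta$ is a limit point of $K$, and closedness of $K$ gives $\zeta\in K$. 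A small separate remark handles the case where $\zeta$ is a leaf of $T_K$: then the geodesic $P(\zeta)$ is finite, $\zeta$ itself sits in $\partial T_\alpha\cap K$ for the last edge $\alpha$, and we are done. The main (mild) obstacle is just being careful that the definition of $T_K$ produces a bona fide subtree and that leaves are accommodated, since after that, everything follows from the compactness/closedness of $K$ and the geometry of the metric $\rho$.
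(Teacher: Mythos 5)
Your proof is correct and follows essentially the same route as the paper's: your subtree $E(T_K)=\{\alpha:\partial T_\alpha\cap K\neq\emptyset\}$ coincides with the paper's $P(K)=\bigcup_{\zeta\in K}P(\zeta)$, and your limit-point argument for $\partial T_K\subseteq K$ is just the contrapositive of the paper's argument by contradiction via a basic neighbourhood $\partial T_\alpha$ disjoint from $K$. The extra care you take with leaves and with the subspace topology is a welcome but inessential refinement.
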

\begin{proof}
The fact that the boundary of a subtree is compact follows directly from the definition of subtree. Conversely, if $K$ is compact, consider the subtree $S\subseteq T$ having as edge set $E(S)=P(K):=\lbrace \alpha\in E: \ \alpha<\xi, \text{ for some } \xi\in\partial K\rbrace$. Clearly $K\subseteq\partial S$. On the other hand, if $\xi\in\partial S$, by definition of boundary of a tree, $P(\lbrace\xi\rbrace)\subseteq E(S)$. Now, suppose by contraddiction that $\xi\notin K$. Then, by compactness, there exists and an edge $\alpha\in E$ such that $\partial T_\alpha\bigcap K=\emptyset$ and $\xi\in\partial T_\alpha$. Hence, $\beta\notin P(K)$ for $\beta\in P(\lbrace\xi\rbrace)\subseteq E(S)$ with $|\beta|\geq|\alpha|$, leading to a contraddiction.
\end{proof}

We present now a Nonlinear Potential Theory on the tree $T$ which falls within the axiomatics developed in \S2.3-2.5 of the treatise of Adams-Hedberg \cite{adams}. We consider the compact metric space $(\overline{V},d)$ and we make $E$ into a measure space by endowing it with the counting measure. We introduce the kernel $k:\overline{V}\times E\to \mathbb{R}$, given by the indicator function $k(\xi,\alpha)=\mathbb{I}(\lbrace(\xi,\alpha): \ \alpha<\xi\rbrace)$. Observe that $k(\cdot,\alpha)$ is continuous on $\partial T$, since $\partial T_{\alpha}$ is open.

Given a function $ f: E \rightarrow \mathbb{R} $ and $p,p'\in(1,+\infty)$ such that $1/p+1/p'=1$, we define the \textit{$p-$potential} of $f$, $I_pf: \overline{V} \rightarrow \mathbb{R}\cup\{\pm \infty \} $, by
\begin{equation*}
  I_pf(\xi)=\sum_{\alpha\in E} k(\xi,\alpha)f(\alpha)|f(\alpha)|^{p'-2}=\sum_{E\ni\alpha < \xi }f(\alpha)|f(\alpha)|^{p'-2}.  
\end{equation*}
We fix the convention  $I_pf(o)=0$ and we ease the notation by simply writing $If$ in place of $I_2f$.

The \textit{co-potential} of a charge $\mu\in \mathcal{M}(\partial T)$ is defined as the edge function
\begin{equation*}
    I^\ast\mu(\alpha)=\int_{\overline{V}}k(\xi,\alpha)d\mu(\xi)=\mu\left(\partial T_\alpha\right), \quad \alpha\in E.
\end{equation*}
Observe that $\langle If,\mu\rangle_{L^2(\partial T)}=\langle f, I^\ast \mu\rangle_{\ell^2(E)}$.

To the charge $\mu$ one can then associate the \textit{nonlinear potential} $V_p\mu(\xi)=I_pI^\ast\mu (x)$, $\xi\in \overline{V}$. It is easily seen that for $p=2$ one recovers the logarithmic potential of $\mu$ on the metric space $(\overline{V},d)$,
\begin{equation*}
    V_2\mu(\xi)=\int_{\partial T}\log\frac{1}{d(\xi,\nu)}d\mu(\nu), \quad \xi\in \overline{V}.
\end{equation*}
The \textit{$p-$energy} of the charge $\mu$ is given by
\begin{equation*}
    \mathcal{E}_p(\mu)=\int_{\partial T}V_p\mu(\xi)d\mu(\xi)=\Vert I^\ast\mu\Vert_{p'}^{p'}.
\end{equation*}
Setting $\Omega_A=\lbrace f\in\ell^p(E): \ If\geq 1 \ \mbox{on} \ A \rbrace$, we define the $p$-\textit{capacity} of a set $A\subseteq \partial T$ as
\begin{equation*}
c_p(A)=\inf_{f \in \Omega_A}\Vert f\Vert_p^p.
\end{equation*}
A property holds $c_p$-a.e. on $A$ if it holds everywhere on $A$ but for a subset of zero $p$-capacity. 
A function $f$ is \textit{admissible} for $A\subseteq \partial T$ if $f\in\overline{\Omega_A}^{\ell^p}$. It can be proved that $\overline{\Omega_A}^{\ell^p}=\lbrace f\in\ell_+^p: \ If\geq 1 \ c_p-a.e. \ \mbox{on} \ A \rbrace$, see \cite{adams}, and that there exists a unique function $f^A\in \overline{\Omega_A}^{\ell^p}$, called $p$-\textit{equilibrium function} for $A$, such that
\begin{equation*}
c_p(A)=\min_{f \in \overline{\Omega_A}^{\ell^p}}\Vert f\Vert_p^p=\Vert f^A\Vert_p^p.
\end{equation*}
Moreover, for such a function it holds $If^A=1$ $c_p$-a.e. on $A$.

We call a point $\xi\in A$ \textit{irregular} for $A$ if $If^A(\xi)\neq 1$. Clearly the set of irregular points for any set $A$ has zero $p-$capacity.

As a set function, $p$-capacity is monotone, countably subadditive and regular from inside and outside, i.e.,
\begin{itemize}
\item[(in)] $\displaystyle c_p\big(\bigcup_n A_n\big)=\lim_n c_p(A_n)$, for any increasing sequence $(A_n)$ of arbitrary subsets of $\partial T$.
\item[(out)] $\displaystyle c_p\big(\bigcap_n(K_n)\big)=\lim_n c_p(K_n)$, for any decreasing sequence $(K_n)$ of compact subsets of $\partial T$.
\end{itemize}

A set $A\subseteq \partial T$ is \textit{capacitable} if $c_p(A)=\sup\lbrace c_p(K): \ K\subseteq A, \ K \ \text{compact}\rbrace=\inf\lbrace c_p(G): \ F\supseteq A, \ G \ \text{open}\rbrace$. From (in) and (out) follows that all Suslin sets, and in particular all Borel sets, are capacitable, see \cite{choquety}.

Observe that without losing generality, in the definition of $p-$capacity the infimum can be taken over functions supported on the \textit{predecessor set} of $\overline{A}$, $P(\overline{A}):=\lbrace \alpha\in E: \ \alpha<\xi, \text{ for some } \xi\in \overline{A}\rbrace$. Namely, the capacity of a compact set $K$ only depends on the combinatorics of $P(K)$ and not on the rest of the tree, and if $T_K\subseteq T$ is the subtree having $K$ as a boundary (see Proposition \ref{prop:tree with prescribed compact boundary}), we have $c_p(K)=c_p(\partial T_K)$, where the right handside is intended as the capacity when $T_K$ is taken as the ambient space.

It is possible to give a dual definition of capacity in terms of measures on $\partial T$ rather than of functions on $E$.
The proof of the following theorem is a straightforward adaptation of an equivalent result \cite[Theorem 2.5.6]{adams} for capacities in $\mathbb{R}^n$ arising by regular enough kernels.

	\begin{thmA}
		Suppose that $ A \subseteq \partial T $ is a Suslin set. Then
		\begin{equation*}
		c_p(A)=\sup\{\mu(A)^p: \mu\geq 0, \,\,\, \supp(\mu) \subseteq A, \mathcal{E}_p(\mu) \leq 1 \}.
		\end{equation*}
		Moreover, there exists a unique positive Borel measure $ \mu^A $ supported in $ \overline{A} $, called the $ p-$equilibrium measure of $ A $, such that 
		\begin{equation*}
		\mu^A(\overline{A})=c_p(A)=\mathcal{E}_p(\mu^A),
		\end{equation*}and $I^*\mu^A(\alpha)^{p'-1}=f^A(\alpha)$, $\alpha\in E$.
	\end{thmA}

With this definition of capacity, it is clear that if a property $(P)$ holds $c_p-$a.e. on $A\subseteq \partial T$, then it holds $\mu-$a.e. on $A$ for every measure $\mu$ with $\mathcal{E}_p(\mu)<\infty$. To see this, let $B:=\lbrace \xi \in A: \ \neg (P) \rbrace$, so that $c_p(B)=0$. The measure $\displaystyle\nu:=\frac{\mu|_B}{\mathcal{E}_p(\mu)^{1/p^\prime}}$ satisfies $\mathcal{E}_p(\nu)\leq 1$. Then, $\nu(B)\leq c_p(B)=0$, from which it follows $\mu(B)=0$.

We conclude the section by showing that in $\partial T$ there exist compact subsets with arbitrary $p$-capacity.

\begin{prop}\label{every capacity}
Let $T^n$ be a homogeneus tree of degree $n$. For each real number $t\in[0,c_p(\partial T^n)]$ there exists a compact subset $K_t$ of the boundary $\partial T^n$ such that $c_p(K_t)=t$.
\end{prop}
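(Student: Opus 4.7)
The plan is to exhibit a monotone, continuous, one-parameter family of compact sets $\{K_s\}_{s\in[0,1]}\subseteq\partial T^n$ interpolating between the empty set and the whole boundary, and then conclude by applying the intermediate value theorem to $s\mapsto c_p(K_s)$. I would label the $n$ children of each vertex of $T^n$ by the symbols $\{0,1,\ldots,n-1\}$, identifying $\partial T^n$ with $\{0,1,\ldots,n-1\}^{\mathbb{N}}$, and consider the base-$n$ expansion map
\begin{equation*}
\Phi:\partial T^n\to[0,1],\qquad \Phi(\zeta_1,\zeta_2,\ldots)=\sum_{k=1}^\infty \zeta_k\, n^{-k}.
\end{equation*}
This $\Phi$ is a continuous surjection, and the fibre $\Phi^{-1}(s)$ contains at most two points for each $s$ (a terminating and a non-terminating expansion when $s$ is an $n$-adic rational, a single point otherwise).

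Next I would set $K_s:=\Phi^{-1}([0,s])$, which is closed and hence compact in $\partial T^n$, and define $\phi(s):=c_p(K_s)$. Monotonicity of $\phi$ is immediate; the key step is its continuity. For a decreasing sequence $s_n\downarrow s$, the intersection $\bigcap_n K_{s_n}$ coincides with $K_s$, so the outer continuity of $c_p$ along decreasing sequences of compact sets (recorded among the set-theoretic properties of $c_p$ in \S\ref{SecPotential}) yields $\phi(s_n)\to\phi(s)$. For an increasing sequence $s_n\uparrow s$, the union $\bigcup_n K_{s_n}$ equals $\Phi^{-1}([0,s))$, which differs from $K_s$ only by the at-most-countable set $\Phi^{-1}(s)$. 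Since every countable subset of $\partial T^n$ has zero $p$-capacity (as observed after Theorem~\ref{capacity general sets}), countable subadditivity together with inner continuity of $c_p$ give $\phi(s_n)\to c_p(\Phi^{-1}([0,s)))=c_p(K_s)=\phi(s)$.

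Once continuity is established, the boundary values $\phi(0)=c_p(\{(0,0,\ldots)\})=0$ and $\phi(1)=c_p(\partial T^n)$, combined with the intermediate value theorem, supply for every $t\in[0,c_p(\partial T^n)]$ a parameter $s\in[0,1]$ with $c_p(K_s)=t$; one then sets $K_t:=K_s$. The step I expect to require most care is the increasing-sequence case, where one must handle the discrepancy $\Phi^{-1}(s)\subseteq K_s\setminus\bigcup_n K_{s_n}$ by invoking the vanishing of $p$-capacity on singletons of non-leaves (hence on countable sets). The rest amounts to a routine combination of monotone continuity of $c_p$ with the classical intermediate value theorem.
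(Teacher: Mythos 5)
Your proposal is correct and follows essentially the same route as the paper: the same base-$n$ expansion map, the same one-parameter family $\Phi^{-1}([0,s])$, continuity of $s\mapsto c_p(\Phi^{-1}([0,s]))$ via the regularity properties of $c_p$ together with the vanishing capacity of the (finite) fibres, and the intermediate value theorem. The only cosmetic difference is that the paper bounds $\varphi(t+\varepsilon)-\varphi(t)\le c_p(\Lambda^{-1}[t,t+\varepsilon])$ by subadditivity and sends $\varepsilon\to0$ using outer regularity on the decreasing compacts $\Lambda^{-1}[t,t+\varepsilon]$, whereas you treat the two one-sided limits separately with outer and inner continuity; these are interchangeable.
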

\begin{proof}
Each edge $\alpha$ of $T^n$, except the root, can be given an index $i(\alpha)\in\lbrace 0,\dots,n-1\rbrace$ which distinguishes it from the other $n-1$ edges $\beta$ such that $b(\beta)=b(\alpha)$. We can define a map $\Lambda: \partial T^n\to [0,1]$ associating to each point $\xi=\lbrace \alpha_j\rbrace_{j=1}^\infty\in\partial T^n$ the number having expansion in base $n$ given by
\begin{equation*}
\Lambda(\xi)=\sum_{j=1}^\infty i(\alpha_j)n^{-j}.
\end{equation*}

The map $\Lambda$ is clearly onto but it fails to be injective because of the multiple representations of the rational numbers. 
Still, $\Lambda^{-1}(t)$ has at most two points. Moreover, $\Lambda$ is continuous, since
\begin{equation*}
|\Lambda(\xi)-\Lambda(\eta)|\leq(n-1)\sum_{j=|\xi\wedge\eta|+1}^\infty n^{-j}\approx n^{-|\xi\wedge\eta|}\longrightarrow 0, \quad \text{as} \ \rho(\xi,\eta)\to 0.
\end{equation*}
Now, consider the function $\varphi:[0,1]\longrightarrow\mathbb{R}$ given by $\varphi(t)=c_p(\Lambda^{-1}[0,t])$. 
This is an increasing map, and we know that $\varphi(0)=0$ and $\varphi(1)=c_p(\partial T^n)$. 
By the subadditivity of $c_p$, the continuity of $\Lambda$ and the regularity of capacity, we have
	\begin{equation*}
	\varphi(t+\varepsilon)-\varphi(t)\leq c_p\left(\Lambda^{-1}[t,t+\varepsilon]\right)\longrightarrow c_p(\Lambda^{-1}\lbrace t\rbrace).
	\end{equation*}
The right handside equals zero, since the preimage of a single point under $\Lambda$ is finite. By similar reasoning 
we estimate $\varphi(t)-\varphi(t-\epsilon)$.
It follows that $\varphi$ is continuous and 
$\varphi([0,1])=[0,c_p(\partial T^n)]$. The result is obtained picking $K_t=\Lambda^{-1}[0,\varphi^{-1}(t)]$.
\end{proof}

\section{Rescaling of capacities}\label{SecRescaling}
In this section we show that equilibrium measures rescale under changes of the root in a tree, 
in a sense that will be more clear soon. This is a point where trees behave much more simply than general planar graphs, and constitutes the key property to prove $(i)$ in Theorem \ref{main 1}. We introduce a subscript notation to indicate which is the root of the tree we are referring to. 
For example, give the rooted tree $T$ and some edge $\alpha\in E$, we write $I_{p,\alpha}$ for the $p-$potential operator acting on functions defined on the edges of the $\alpha-$tent $T_\alpha$, $I_{p,\alpha}f=I_p(\bigchi_{T_{\alpha}}f)$. Accordingly, $V_{p,\alpha}\mu=I_{p,\alpha}I^\ast\mu$, $\mathcal{E}_{p,\alpha}(\mu)=\sum_{\beta\geq\alpha}|I^\ast\mu(\beta)|^{p'}$ and $c_{p,\alpha}$ denotes the capacity when we consider $T_\alpha$ as ambient space. In the same fashion, if $A$ is a subset of $\partial T$, $A_{\alpha}=A\cap\partial T_{\alpha}$.

A question arises: if $A$ is a subset of $\partial T$ and $\mu$ its $p-$equilibrium measure, which is the $p-$equilibrium measure $\mu_\alpha$ for $A_{\alpha}$ in the tent $T_{\alpha}$? It is natural to expect that it is a rescaling of the measure $\mu$, i.e., $\mu_{\alpha}=k_{\alpha}\mu|_{A_{\alpha}}$ for some positive constant $k_{\alpha}$. In such a case, for $c_p-$a.e. $\xi$ in $A_{\alpha}$, we would have
\begin{equation*}
  1=V_{p,\alpha}\mu_\alpha(\xi)=k_{\alpha}^{p^\prime-1}V_{p,\alpha}\mu(\xi)=k_{\alpha}^{p^\prime-1}\Big(V_p\mu(\xi)-V_p\mu(b(\alpha))\Big)=k_{\alpha}^{p^\prime-1}\Big(1-V_p\mu(b(\alpha))\Big).  
\end{equation*}
It follows that the only possible candidate rescaling constant is
\begin{equation}\label{k}
k_{\alpha}=\Big(1-V_p\mu(b(\alpha))\Big)^{1-p}.
\end{equation}
We now prove that our ansatz is correct. This was already observed in \cite{arcozzi2014potential}.
\begin{prop}\label{prop:rescaling}
Let $\mu$ be the $p-$equilibrium measure for a set $A\subseteq \partial T$. Then,
\begin{equation*}
   \mu_{\alpha}:=k_{\alpha}\mu|_{A_{\alpha}}=\frac{\mu |_{\partial T_\alpha}}{\Big(1-V_p\mu(b(\alpha))\Big)^{p-1}}, 
\end{equation*}
is the $p-$equilibrium measure for $A_{\alpha}\subseteq \partial T_{\alpha}$.
\end{prop}
\begin{proof}
Let $\varphi_{\alpha}$ be the (unique) $p-$equilibrium function of $A_\alpha$ in $T_\alpha$. We want to prove that $\varphi_{\alpha}=I^\ast\mu_\alpha^{p'-1}$. Define the edge function,
\begin{equation*}
  f(\beta)= \begin{cases}
\Big(1-V_p\mu(b(\alpha))\Big)\varphi_{\alpha}(\beta) & \text{if }  \beta\geq\alpha\\
I^\ast\mu(\beta)^{p'-1} & \text{otherwise } .
\end{cases}  
\end{equation*}
Then $f$ is admissible for $A\subseteq \partial T$, since for $c_p-$a.e. $\xi \in A\setminus A_{\alpha}$, $If(\xi)= V_p\mu(\xi)=1$, while for $c_p-$a.e. $\xi\in A_{\alpha}$ it holds
\begin{equation*}
  If(\xi)= I_{\alpha}f(\xi)+If(b(\alpha))=\Big(1-V_p\mu(b(\alpha))\Big)I_{\alpha}\varphi_{\alpha}(\xi)+V_p\mu(b(\alpha))=1.
\end{equation*}
Clearly $I^\ast\mu^{p'-1}$ is an admissible function for $A_\alpha\subseteq\partial T_\alpha$, since $I_\alpha I^\ast\mu^{p'-1}\leq I I^\ast\mu^{p'-1}=1$, $c.p-$a.e. on $A$. Hence, it must be $\Vert \varphi_{\alpha}\Vert_{\ell^p(T_\alpha)}^p\leq \Vert I^\ast\mu^{p'-1}\Vert_{\ell^p(T_\alpha)}^p=\Vert I^\ast\mu\Vert_{\ell^{p'}(T_\alpha)}^{p'}$. It follows
\begin{equation*}
\begin{split}
\Vert f\Vert_p^p &=\Big(1-V_p\mu(b(\alpha))\Big)^p\Vert \varphi_{\alpha}\Vert_{\ell^p(T_\alpha)}^p+\Vert I^\ast\mu^{p'-1}\Vert_{\ell^p(T\setminus T_\alpha)}^p\\
&\leq\Vert I^\ast\mu\Vert_{\ell^{p'}(T_\alpha)}^{p'}+\Vert I^\ast\mu\Vert_{\ell^{p'}(T\setminus T_\alpha)}^{p'}=\mathcal{E}_p(\mu)=c_p(A).
\end{split}
\end{equation*}
Then, $f$ must be the equilibrium function for $A$, and by the uniqueness of the equilibrium function $f=I^\ast\mu^{p'-1}$. This implies $\varphi_{\alpha}=I^\ast\mu_\alpha^{p'-1}$.
\end{proof}
As an immediate consequence we have the following.
\begin{cor}\label{cor}
Let $A\subseteq \partial T$ be a set of positive capacity and $\alpha\in E$, $\alpha$ not the root edge of $T$, such that $A\subseteq \partial T_{\alpha}$. Then $c_{p,\alpha}(A)> c_p(A)$.
\end{cor}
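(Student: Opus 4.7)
My plan is to read off $c_{p,\alpha}(E)$ directly from Proposition \ref{rescaling} and then to check that the rescaling constant is strictly greater than $1$. Since $E\subseteq \partial T_\alpha$, we have $E_\alpha = E$ and $\mu|_{\partial T_\alpha} = \mu$, so Proposition \ref{rescaling} identifies the $p$-equilibrium measure of $E$ inside the subtree $T_\alpha$ as
$$\mu_\alpha = k_\alpha\mu, \qquad k_\alpha = \bigl(1 - IM_p(b(\alpha))\bigr)^{-p/p'}.$$
Applying Theorem \ref{capacity general sets} separately to the ambient tree $T$ and to the subtree $T_\alpha$ yields
$$c_{p,\alpha}(E) = \|\mu_\alpha\| = k_\alpha\|\mu\| = k_\alpha\, c_p(E).$$
Since $c_p(E)>0$ and $p/p' = p - 1 > 0$, the desired strict inequality $c_{p,\alpha}(E)>c_p(E)$ is equivalent to the single assertion $IM_p(b(\alpha))>0$, which is what I would verify to finish.

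For this strict positivity I use that $\alpha$ is not the root, so the predecessor set $P(b(\alpha))$ is non-empty (it contains at least the root edge $\omega$). For every $\beta\in P(b(\alpha))$ one has $\beta<\alpha$, hence $\partial T_\beta\supseteq \partial T_\alpha\supseteq \overline{E}$. Since $\mu$ is supported in $\overline{E}$ with total mass $\|\mu\| = c_p(E) > 0$, this forces
$$M(\beta) = \mu(\partial T_\beta) \geq \mu(\overline{E}) = c_p(E) > 0.$$
Summing the positive quantities $M(\beta)^{p'-1}$ over $\beta\in P(b(\alpha))$ gives a strictly positive $IM_p(b(\alpha))$, as required.

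I do not anticipate any real obstacle. The only step worth a moment's thought is that $k_\alpha$ must be finite, i.e. $IM_p(b(\alpha)) < 1$; but this is already built into the applicability of Proposition \ref{rescaling}, and in any case follows directly from $IM_p \leq 1$ on $\supp\mu\supseteq \overline{E}$ together with $M_p(\alpha)>0$ (the latter being a special case of the same calculation applied to $\beta=\alpha$).
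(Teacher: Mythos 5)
Your proof is correct and takes essentially the same route as the paper's: the paper's own argument is the one-liner $c_{p,\alpha}(E)=\mu_{\alpha}(E)=k_{\alpha}\mu(E)>\mu(E)=c_p(E)$ resting on $k_\alpha>1$, and your verification that $IM_p(b(\alpha))>0$ (via $M(\beta)\geq\mu(\overline{E})=c_p(E)>0$ for every $\beta<\alpha$) simply makes explicit the step the paper leaves unstated. (Minor slip: the inclusion in your last paragraph should read $\supp\mu\subseteq\overline{E}$, not $\supseteq$; it does not affect the argument.)
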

\begin{proof}
Since $k_{\alpha}>1$, we have $c_{p,\alpha}(A)=\mu_{\alpha}(A)=k_{\alpha}\mu(A)>\mu(A)=c_p(A)$.
\end{proof}
Observe by passing that the above corollary is supported by \textit{visual intuition}, since we expect the capacity of a set to be larger if we look at the set from a closer point of view.

We can now give a necessary condition for a measure to be of equilibrium, thus proving $(i)$ in Theorem \ref{main 1}.

\begin{proof}[Proof of $(i)$ in Theorem \ref{main 1}]
We claim that for every $\alpha\in E$, $\mu$ solves the following equation:
\begin{equation}\label{eq:formula}
I^\ast\mu(\alpha)\Big(1-V_p\mu(b(\alpha))\Big)=\mathcal{E}_{p,\alpha}(\mu).
\end{equation}
Suppose the claim holds and let $g=V_p\mu$. It is easily seen that $\nabla g[x,y]=I^\ast\mu[x,y]^{p'-1}$ for every edge $[x,y]\in E$. It follows that $g$ solves the equilibrium equation \eqref{equilibrium equation}.

To prove the claim, let $\alpha\in E$. If $A_{\alpha}=\emptyset$, then by the topology of the tree follows that also $\overline{A}\cap\partial T_\alpha=\emptyset$. Since $\supp(\mu)\subseteq \overline{A}$, $I^\ast\mu(\alpha)=0$, and hence $I^\ast\mu(\beta)=0$ for all $\beta\geq\alpha$, and \eqref{eq:formula} trivially holds. Otherwise, on one hand we have
\begin{equation*}
    c_{p,\alpha}(A_{\alpha})=\mathcal{E}_{p,\alpha}(\mu_{\alpha})=\frac{\mathcal{E}_{p,\alpha}(\mu)}{\Big(1-V_p\mu(b(\alpha))\Big)^p},
\end{equation*}
and on the other,
\begin{equation*}
c_{p,\alpha}(A_{\alpha})=\mu_{\alpha}(A_{\alpha})=k_{\alpha}\mu(A_{\alpha})=
\frac{I^\ast\mu(\alpha)}{\Big(1-V_p\mu(b(\alpha))\Big)^{p-1}}.
\end{equation*}
Matching the two expressions we get the claim.
\end{proof}

\section{Flows on edges and $p-$harmonic functions}\label{SecFlows}
Observe that equation \eqref{eq:formula} can be regarded as a special instance of an equation for edge functions:
\begin{equation}\label{eq:edges}
f(\alpha)\Big(1-I_pf(b(\alpha))\Big)=\sum_{\beta\geq\alpha}|f(\beta)|^{p'}, \quad f:E\to\mathbb{R}.
\end{equation}
In the particular case when the function $f$ is the co-potential of a charge $\mu$ on the $\partial T$, the above equation reduces to \eqref{eq:formula}. It is clear that a necessary condition for an edge function $f$ to be the co-potential of a charge is to satisfy the following additivity condition
\begin{equation}\label{eq:flow tree}
f(\alpha)=\sum_{\beta\in s(\alpha)}f(\beta), \quad \text{for all } \alpha\in E.
\end{equation}
A function $f:E\to\mathbb{R}$ fulfilling \eqref{eq:flow tree} is said to be a \textit{flow} on $T$.

In \cite[Proposition 2.2]{chalmoukis2019some} a full characterization for edge functions which are co-potential of charges is given. The nonnegative case, however, is easy to rule out: an edge function $f$ is a nonnegative flow if and only if $f=I^\ast\mu$ for some nonnegative measure $\mu$ on $\partial T$.

It turns out that solutions of \eqref{eq:edges} with bounded $p-$potential are always flows, and hence they have a measure representation.

\begin{prop}\label{prop:main per funzioni}
Let $f:E\to\mathbb{R}$ be a solution of \eqref{eq:edges} such that $I_p f<1$ on $V$. Then $f=I^\ast\mu$ for some nonnegative measure $\mu$ on $\partial T$.
\end{prop}
\begin{proof}
First of all observe that for a function $f$ solving \eqref{eq:edges}, the condition $I_p f<1$ automatically implies that $f\geq 0$ on $E$. We want to show that $f$ satisfies \eqref{eq:flow tree} for every $\alpha\in E$. If $f(\alpha)=0$ for some edge $\alpha$, then the right handside of equation (\ref{eq:edges}) says that $f(\beta)= 0$ on all edges $\beta\geq \alpha$ and then clearly \eqref{eq:flow tree} holds in $\alpha$. Now, consider edges $\alpha$ such that $f(\alpha)\neq 0$. We have
\begin{equation}\label{eq:main per funzioni}
\begin{split}
    \sum_{\beta\ge\alpha}f(\beta)^{p'}-f(\alpha)^{p'}&=\sum_{\beta\in s(\alpha)}\sum_{\gamma\ge\beta}f(\beta)^{p'}=\sum_{\beta\in s(\alpha)}f(\beta)\Big(1-I_p f(b(\beta))\Big)\\
    &=\Big(1-I_p f(e(\alpha))\Big)\sum_{\beta\in s(\alpha)}f(\beta)\\
    &=\Big(1-I_p f(e(\alpha))\Big)\Big(-f(\alpha)+\sum_{\beta\in s(\alpha)}f(\beta)\Big)+\sum_{\beta\ge\alpha}f(\beta)^{p'}-f(\alpha)^{p'}.
    \end{split}
    \end{equation}
    Since $1-I_p f(e(\alpha))> 0$, for every $\alpha$, then $f$ is a flow, and the result follows.
    \end{proof}

A similar calculation as in the proof of Proposition \ref{prop:main per funzioni} can be used to show that, if $f$ is a (not necessarily positive) flow and \eqref{eq:formula} holds for $|\alpha|$ large, then it holds everywhere. We give here the explicit details.
\begin{prop}
Let $\alpha\in E$ and suppose that $f$ is a flow solving \eqref{eq:formula} for $\beta\in s(\alpha)$. Then $f$ solves \eqref{eq:formula} also in $\alpha$.
\end{prop}
\begin{proof}
Following the line of \eqref{eq:main per funzioni} and using the flow property of $f$ we easily obtain
\begin{equation*}
\begin{split}
\sum_{\beta\ge\alpha}|f(\beta)|^{p'}&=|f(\alpha)|^{p'}+\Big(1-I_p f(e(\alpha))\Big)\sum_{\beta\in s(\alpha)}f(\beta)\\
&=f(\alpha)\Big( f(\alpha)|f(\alpha)|^{p'-2}+1-I_p f(e(\alpha))\Big)=f(\alpha)\Big(1-I_p f(b(\alpha))\Big).\qedhere
\end{split}
\end{equation*}
\end{proof}

We end the section by pointing out that the flow condition \eqref{eq:flow tree} for edge functions has a neat counterpart for vertex functions. In fact, it is easy to observe \cite[Proposition 2.3]{chalmoukis2019some} that $f$ is a flow if and only if $I_pf$ is $p-$harmonic on $V\setminus\lbrace o\rbrace$. In this context, the $p$-\textit{Laplacian} of a function $g:V\to\mathbb{R}$ at a vertex $x$ is given by
\begin{equation*}
\Delta_p g (x):=\sum_{y\sim x}\Big(g(y)-g(x)\Big)\Big|g(y)-g(x)\Big|^{p-2},
\end{equation*}
and $g$ is $p$-\textit{harmonic} if $\Delta_p g \equiv 0$ on $V\setminus\lbrace o \rbrace$. From Proposition \ref{prop:main per funzioni} then immediately follows
\begin{cor}
Let $g:V\to\mathbb{R}$ be a solution of \eqref{equilibrium equation} with $\Vert g\Vert_\infty<1$. Then $g$ is $p-$harmonic. 
\end{cor}

\section{Proof of Theorem \ref{main 1}}\label{SecSufficient}
In this section, we prove $(ii)$ in Theorem \ref{main 1}, namely that for a measure with bounded nonlinear potential it is sufficient to solve equation \eqref{equilibrium equation} to be an equilibrium measure.



\begin{proof}[Proof of (ii) in Theorem \ref{main 1}]
Let $g:V\to\mathbb{R}$, with $\Vert g\Vert_\infty<1$, be a solution of \eqref{equilibrium equation}. Without loss of generality, normalize assuming $g(o)=0$ (the function $\tilde{g}=(g-g(o))/(1-g(o))$ solves \eqref{equilibrium equation}, is bounded by $1$ and vanishes at $o$).
Set $f(\alpha)=\nabla g(\alpha)^{p-1}$, $\alpha\in E$. Then $f$ solves \eqref{eq:edges} and $I_pf=g$ is bounded by $1$. It follows by Proposition \ref{prop:main per funzioni} that there exists a nonnegative Borel measure $\mu$ on $\partial T$ such that $f=I^\ast\mu$ and, consequently, $g=V_p\mu$.

Observe that $\mu$ has finite $p-$energy: since it solves \eqref{eq:formula} at every edge $\alpha$, by evaluating at $\alpha=\omega$ we get $\mathcal{E}_p(\mu)=\mu(\partial T)=V_p(\mu)(e(\omega))^{p-1}<1$. Moreover, $V_p\mu(\xi)=\lim_{x\to\xi}V_p\mu(x)\leq 1$. We show that indeed $V_p\mu=1$, $\mu-$a.e. on $\partial T$.

Let $S_N=\lbrace \alpha\in E: \ |\alpha|=N \rbrace$. To each $N\in \mathbb{N}$ we associate a piecewise-constant function $\Phi_N$ on the boundary, $\Phi_N(\xi)=(1-V_p\mu(b(\alpha)))$ for $\xi\in\partial T_{\alpha}$, $\alpha\in S_N$. Then we have
\begin{equation*}
\begin{split}
0&\leq\int_{\partial T} \Phi_N(\xi)d\mu(\xi)=\sum_{\alpha\in S_N}\int_{\partial T_{\alpha}} \Phi_N(\xi)d\mu(\xi)=\sum_{\alpha\in S_N}(1-V_p\mu(b(\alpha)))I^\ast\mu(\alpha)\\
&=\sum_{\alpha\in S_N}\mathcal{E}_{p,\alpha}(\mu)=\sum_{|\beta|\geq N}I^\ast\mu(\beta)^{p'}=\mathcal{E}_p(\mu)-\sum_{|\beta|< N}I^\ast\mu(\beta)^{p'}.
\end{split}
\end{equation*}
Since $\mathcal{E}_p(\mu)<\infty$, it follows that
\begin{equation*}
    \int_{\partial T} \Phi_N(\xi)d\mu(\xi) \longrightarrow 0, \text{ as } N\to +\infty.
\end{equation*}
Also, $\Phi_N(\xi)\searrow\Phi(\xi):=1-V_p\mu(\xi)\geq 0$ as $N\to +\infty$. Hence, by monotone convergence theorem, we obtain
\begin{equation*}
\int_{\partial T} \Phi(\xi)d\mu(\xi)=\int_{\partial T} (1-V_p\mu(\xi))d\mu(\xi)=0,
\end{equation*}
which gives $V_p\mu(\xi)=1$, $\mu-$ a.e. on $\partial T$.

Consider now the \textit{irregular points} for $\mu$, i.e., with the $\mu-$measure zero set 
\begin{equation*}
\mathcal{I}(\mu)=\lbrace \xi\in\supp(\mu): \ V_p\mu(\xi)<1\rbrace.
\end{equation*}
Let $B_{n}=\lbrace\xi\in\supp(\mu): \ V_p\mu(\xi)\leq 1-1/2^n\rbrace$. Clearly $B_n\subseteq B_{n+1}$ and $\mathcal{I}(\mu)=\bigcup_n B_n$. 
Fix $\varepsilon>0$, and choose a collection of edges $(\alpha_j^n)_{n\in \mathbb{N},j\in \mathcal{J}_n}$, such that $\lbrace \partial T_{j,n}\rbrace_{j\in \mathcal{J}_n}$ is an open cover of $B_n$, where $T_{j,n}$ is the $\alpha_j^n-$tent. Without loss of generally, we can assume that $(\alpha_j^n)_{n\in \mathbb{N},j\in \mathcal{J}_n}$ satisfies the following:
\begin{itemize}
\item[(i)] $B_n\subseteq \bigcup_{j\in \mathcal{J}_n} \partial T_{j,n}$
\item[(ii)] $T_{j,n}\bigcap T_{i,l}=\emptyset$, for $(j,n)\neq (i,l)$
\item[(iii)] $|\mathcal{J}_n|=m_n\in\mathbb{N}$
\item[(iv)] $\sum_{j\in \mathcal{J}_n} I^\ast\mu(\alpha_j^n)<\varepsilon/2^n$.
\end{itemize}
In fact, if the intersection in (ii) was not empty, one of the two would be contained in the other and can be replaced by it in the covering family. Moreover, all the sublevel sets $B_n$ are compact, since $V_p\mu$ is continuous. Hence, for each $n$, we can extract a finite subcover so that $B_n\subseteq \bigcup_{j\in \mathcal{J}_n} \partial T_{j,n}$ with $|\mathcal{J}_n|=m_n\in\mathbb{N}$, which is the finiteness condition on the index set in (iii). Finally, condition (iv) comes from the outer regularity of the measure $\mu$: since $0=\mu(B_n)=\inf \lbrace\mu(\partial T_{\alpha}): \alpha \in E,\partial T_{\alpha} \supseteq B_n  \rbrace$, there exist sequences $(\alpha_j^n)_j$ such that $\mu(\partial T_{j,n})\to 0$ and we can assume to properly extract each subcover from one of those.

Write $\partial T=F_{\varepsilon}\bigcup G_{\varepsilon}$, where $G_{\varepsilon}:=\bigcup_{j,n}\partial T_{j,n}$ and $F_{\varepsilon}=\partial T\setminus G_{\varepsilon}$. Observe that,
\begin{equation*}
\mu(\partial T)\geq \mu (F_{\varepsilon})=\mu(\partial T)-\mu (G_{\varepsilon})=\mu(\partial T)-\sum_{j,n}I^\ast\mu(\alpha_j^n)\geq\mu(\partial T)-\sum_n \varepsilon/2^n=\mu(\partial T)-\varepsilon.
\end{equation*}
Hence we have,
\begin{equation*}
\mu(\partial T)=\lim_{\varepsilon \to 0} \mu (F_{\varepsilon}).
\end{equation*}
Now, $V_p\mu\equiv 1$ on $A_\varepsilon:=\supp(\mu)\cap F_{\varepsilon}$, so that $V_p\mu$ is a $p$-admissible function for $A_\varepsilon$. Then, by definition of capacity we have
\begin{equation}\label{definition}
c_p(A_\varepsilon)\leq \mathcal{E}_p(\mu|_{F_{\varepsilon}})\leq\mathcal{E}_p(\mu)=\mu(\partial T).
\end{equation}
On the other hand, the measure $\displaystyle\nu^{\varepsilon}:=\frac{\mu|_{ F_{\varepsilon}}}{\mathcal{E}_p(\mu|_{F_{\varepsilon}})^{1/p^\prime}}$ is admissible for $A_\varepsilon$, since $\mathcal{E}_p(\nu^{\varepsilon})=1$.
By the dual definition of capacity it follows that
\begin{equation}\label{dual definition}
c_p(A_\varepsilon)\geq\nu^{\varepsilon}(A_\varepsilon)^p\geq\Big(\frac{\mu(F_{\varepsilon})}{\mu(\partial T)^{1/p^\prime}}\Big)^p.
\end{equation}
We are now ready to build up the candidate $F_{\sigma}$ set. Let $\lbrace\varepsilon(k)\rbrace_{k\in\mathbb{N}}$ be a sequence of positive numbers such that $\varepsilon(k)\searrow 0$ as $k\to +\infty$. Define $A:=\bigcup_k A_{\varepsilon(k)}$, which is clearly an $F_{\sigma}$ set. Observe that we can assume that the covers related to each choice of $\varepsilon$ are taken so that $G_{\varepsilon(k+1)}\subseteq G_{\varepsilon(k)}$. Therefore, $A_{\varepsilon(k)}\nearrow$ as $k\to +\infty$. It follows that
\begin{equation*}
\mu(A)=\lim_{k\to\infty}\mu(A_{\varepsilon(k)})=\mu(\partial T)-\lim_{k\to\infty}\mu(G_{\varepsilon(k)})\geq\mu(\partial T)-\lim_{k\to\infty}\sum_n\frac{\varepsilon(k)}{2^n}=\mu(\partial T),
\end{equation*}
while the reverse inequality is trivially true. Using this together with (\ref{definition}) and (\ref{dual definition}) and the regularity of the $p-$capacity, we obtain
\begin{equation*}
c_p(A)=\lim_{k\to +\infty}c_p(A_{\varepsilon(k)})=\mu(\partial T)=\mu(A).\qedhere
\end{equation*}
\end{proof}

It is clear from the proof that the situation is much easier for measures with no irregular points. 

\begin{cor}
Suppose that $\mu\in\mathcal{M}^+(\partial T)$ solves \eqref{eq:formula} and $V_p\mu\equiv 1$ on $\supp(\mu)$. Then, $\mu$ is the $p$-equilibrium measure of $\supp(\mu)$.
\end{cor}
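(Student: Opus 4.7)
The plan is to observe that the heavy lifting in the proof of part (ii) of Theorem \ref{main} was entirely due to the presence of irregular points; when $IM_p \equiv 1$ on $\supp(\mu)$, the hypothesis itself provides the admissibility needed to identify $\supp(\mu)$ directly as the desired set, bypassing the $F_\sigma$ construction.

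First, I would record the basic identity $\mathcal{E}_p(\mu)=\mu(\partial T)$, which follows by evaluating (\ref{formula}) at the root edge $\alpha=\omega$ (so that $b(\omega)=o$ and $IM_p(o)=0$ by the default convention):
\[
\mu(\partial T)=M(\omega)=M(\omega)\bigl(1-IM_p(o)\bigr)=\sum_{\beta\geq\omega}M(\beta)^{p'}=\mathcal{E}_p(\mu).
\]
Next, since by hypothesis $IM_p\equiv 1$ on $\supp(\mu)$, the edge function $M_p$ is admissible for $\supp(\mu)$, whence
\[
c_p(\supp(\mu))\le \|M_p\|_p^{p}=\mathcal{E}_p(\mu)=\mu(\partial T).
\]

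For the reverse inequality I would invoke the dual characterization (Theorem A). The renormalized measure $\nu:=\mu/\mathcal{E}_p(\mu)^{1/p'}$ is supported in $\supp(\mu)$ and satisfies $\mathcal{E}_p(\nu)=1$, so it is admissible, yielding
\[
c_p(\supp(\mu))\ge \nu(\supp(\mu))^{p}=\frac{\mu(\partial T)^{p}}{\mathcal{E}_p(\mu)^{p/p'}}=\frac{\mu(\partial T)^{p}}{\mu(\partial T)^{p/p'}}=\mu(\partial T),
\]
using $p-p/p'=1$. Combining the two bounds, $c_p(\supp(\mu))=\mu(\partial T)=\|M_p\|_p^{p}$, so $M_p$ attains the infimum defining $c_p(\supp(\mu))$ and must coincide with the $p$-equilibrium function $f^{\supp(\mu)}$ by the uniqueness stated in Theorem A. Since $M_p=(I^{\ast}\mu)_p$, the uniqueness of the measure associated to the equilibrium function then forces $\mu=\mu^{\supp(\mu)}$. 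No real obstacle is expected here; the only point that needs care is the bookkeeping with the exponents $p,p'$, but these identities are exactly the ones already exploited in \S\ref{SecRescaling}.
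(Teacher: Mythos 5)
Your proposal is correct and is essentially the paper's own argument: the paper derives this corollary by observing that when $\mathcal{I}(\mu)=\emptyset$ the proof of part (ii) of Theorem \ref{main} collapses to exactly your two inequalities (the admissibility of $M_p$ giving the upper bound on $c_p(\supp\mu)$, and the normalized measure $\mu/\mathcal{E}_p(\mu)^{1/p'}$ giving the matching lower bound via the dual formulation), with $\mathcal{E}_p(\mu)=\mu(\partial T)$ coming from (\ref{formula}) at $\alpha=\omega$. Your explicit closing step, identifying $M_p$ with the equilibrium function by uniqueness and hence $\mu$ with $\mu^{\supp\mu}$, is a detail the paper leaves implicit but is a welcome addition rather than a deviation.
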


\section{Infinite square tilings}\label{SecTiling}
In \cite{brooks} Brooks, Smith, Stone and Tutte considered the problem of tiling a rectangle with a finite number of squares and proved that to any finite 
connected planar graph $G$ can be associated such a tiling. The same graph can produce different tilings. Chosen any two vertices in $G$, they show how the 
associated tiling can be built in such a way to reflect this choice. In \cite{benjamini} Benjamini and Schramm extended 
this result to the infinite case, showing that infinite graphs can produce infinite tilings. Theorem \ref{main 1} can be reformulated, for $p=2$, in terms of square tilings of a rectangle: part $(i)$ of the theorem is essentially equivalent to the infinite tiling theorem by Benjamini and Schramm, in the special case when $G$ is a rooted tree $T$, hence providing a new and different proof of it. More interestingly, part $(ii)$ provides a converse result, in a sense that will be more clear once introduced the proper terminology.

Given a rectangle $R$, i.e, a closed planar region whose boundary is a rectangle, we say that a family of squares $Q=\lbrace Q_j\rbrace_j$ is a 
\textit{square tiling} of $R$ if $\inn(Q_i)\bigcap \inn(Q_j)=\emptyset$, for $i\neq j$, and $R=\overline{\bigcup_j Q_j}$. By rotation invariance of the problem we always think rectangles and squares to have sides parallel to the coordinates axes of $\mathbb{R}^2$, and we talk about upper (lower) and left (right) sides, as well as horizontal and vertical sides, in the obvious way. We write $B(j)$ and $E(j)$ for the upper and lower side of $Q_j$, respectively.

We say that \it the combinatorics of a family $Q$ of squares in the plane are prescribed by a tree $T$ \rm if the followings are true.
\begin{itemize}
\item[(1)] The squares in the family are indexed by the edges of the tree, $Q=\lbrace  Q_\alpha: \ \alpha\in E\rbrace$.
\item[(2)] $B(\alpha)\subseteq E(\beta)$ whenever $b(\alpha)=e(\beta)$.
\end{itemize}

\pagebreak

Theorem \ref{main 1} can be reformulated, for $p=2$, in terms of square tilings, leading to Theorem \ref{main 2}. We detail out the proof here.
\begin{proof}[Proof of Theorem \ref{main 2}]
(i) Given a tree $T$ with root edge $\omega$ and a set $A\subseteq \partial T$, let $\lbrace Q_\alpha\rbrace_{\alpha\in E}$ be a family of squares such that $Q_\alpha$ has side of 
length $\ell(\alpha)=\mu(\partial T_{\alpha})$, being $\mu=\mu^A$ the equilibrium measure of $A$. By the additivity of $\mu$ we can place the squares on the plane in such a way that,
\begin{equation*}
E(\beta)=\bigcup_{\beta\in s(\alpha)}B(\alpha).
\end{equation*}
With this choice, the combinatorics is prescribed by $T$. Moreover, it is clear that the interiors of the squares in the family are pairwise disjoint and that $\bigcup_{\alpha} Q_\alpha$ is both vertically and horizontally convex (its intersection with any vertical and horizontal line is either empty, or a point, or a line segment). Now, let $R$ be the rectangle having vertical sides of 
length $1$ and upper side coinciding with the upper side of $Q(\omega)$, so being of 
length $\mu(\partial T)=\mu(\overline{A})=c_2(A)$. Denote by $|\cdot|$ the area measure. Then,
\begin{equation*}
|R|=c_2(A)=I^*\mu(\omega)=\mathcal{E}_2(\mu)=\sum_{\alpha\in E}\mu(\partial T_\alpha)^2=\sum_{\alpha\in E}|Q_\alpha|=\big|\bigcup_{\alpha} Q_\alpha\big|.
\end{equation*}
It is then enough to show that the family of squares is contained in $R$ to prove that it is a tiling. It is clear that all the family $\lbrace Q_\alpha\rbrace$ lies in between the two vertical sides of $R$, and that the horizontal room is fully filled, by additivity of the measure. Moreover, as already observed in the proof of part (ii) of Theorem \ref{main 1}, for every $\xi\in \partial T$ it must be
\begin{equation*}
1\geq V_2\mu(\xi)=\sum_{\alpha<\xi}\mu(\partial T_{\beta})=\sum_{\alpha<\xi}\ell(\alpha).
\end{equation*}
It follows that $\bigcup_\alpha Q_\alpha\subseteq R$ and $\lbrace Q_\alpha\rbrace_\alpha$ is a tiling.\\
(ii) Let the rectangle $R$ be tiled according to the combinatorics of a tree $T$, as described above. Then for each $\alpha\in E$, we have:
\begin{equation*}
\sum_{\beta \geq\alpha}|Q(\beta)|=\ell(\alpha)\bigg(1-\sum_{\beta<\alpha}\ell(\beta)\bigg).
\end{equation*}
Hence, it is immediate that if we define a measure on $\partial T$ by $\mu(\partial T_{\alpha})=\ell(\alpha)$, it solves equation \eqref{eq:formula}. Then the harmonic function $g=V_2\mu$ is bounded by 1 on $V$ and solves \eqref{equilibrium equation}. By Theorem \ref{main 1}, $\mu$ must be the equilibrium measure of some $F_\sigma$ subset $A$ of $\partial T$.
\end{proof}

It might be interesting to informally discuss some features of the tiling, and its relation to the set $A$. The example below can provide a useful
illustration of what we are here saying in general terms.

For each $\xi$ in $\partial T$, let $\alpha_n$ be the only edge of level $n$ along the geodesic labelled by $\xi$, and choose a point
$x_{n+1}$ in $Q_{\alpha_n}$. Then, it is immediate that $\lim_{n\to\infty}x_n=:\lambda(\xi)$ exists in $R$, and that it does not depend on the precise location of the $x_n$'s in the corresponding squares. Let $\pi(\xi)$ be the orthogonal projection of $\lambda(\xi)$ onto the lower side of $R$, identified with $[0,c_2(A)]$.

\pagebreak

\begin{multicols}{2}
Let $A\subseteq \partial T$ and $\mu^A$ its equilibrium measure. The following facts are easy to check:
\begin{itemize}
\item[(i)] if we replace $T$ by its subtree obtained keeping only the edges $\alpha$ with $\mu^A(\partial T_{\alpha})>0$, then tiling does not have degenerate squares, and $c_2(A\cap\partial T_\alpha)>0$ for all edges $\alpha$.
 \item[(ii)] let $A^\prime$ be the set of regular points for $\mu^A$, i.e. $A^\prime=\{\xi\in\partial T:\ V_2\mu^A(\xi)=1\}$: then $\mu^{A^\prime}=\mu^A$, hence they induce the same tiling.
 \item[(iii)] $\pi$ is injective but possibly at countably many points and surjective from $\partial T$ onto $[0,c_2(A)]$;
 \item[(iv)] $\mu^A(\pi^{-1}(A))=\ell(A)$ for all measurable sets $A\subseteq[0,c_2(E)]$ (where $\ell$ denotes length measure on $[0,c_2(A)]$);
 \item[(v)] let $\text{Ex}(A):=\{\xi\in\partial T:\ \pi(\xi)\ne\lambda(\xi)\}$: then, $\text{Ex}(A)=\partial T\setminus A^\prime$;
\end{itemize}

\columnbreak
\begin{figure}[H]
\includegraphics[scale=0.75]{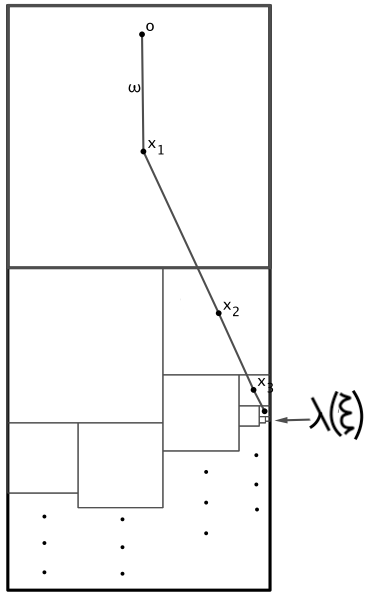}
\caption{For the boundary point $\xi$ identified by the geodesic $\lbrace x_j\rbrace_j$, $\lambda(\xi)$ does not lie on the bottom side of the $R$.}
\end{figure}
\end{multicols}

We can assume from now that the tree has been prune as in (i) and that $A=A^\prime$. Still, we see below that the combinatorics of the tree are not, by themselves, enough to determine a rectangle $R$ and a square tiling of it. They are, if we assume that the set $A$ in the Theorem \ref{main 2}
is \it closed\rm, but they are not in general. This is in striking contrast with the case of finite trees, or more generally graphs.
However, if $c_2(A)<c_2(\partial T)$, then a price has to be paid. In fact, in that case
\begin{equation*}
  c_2(\text{Ex}(A))\ge c_2(\partial T)-c_2(A)>0,  
\end{equation*}
i.e., the exceptional set $\text{Ex}(A)$ is rather large, although, clearly, $0=\mu^A(\text{Ex}(A))=\ell(\pi(\text{Ex}(A)))$.
To enlighten this phenomenon, we provide an example of a set which is densly spread out on the boundary having exceptional set of full capacity.

\begin{exmpl}[A regular set of dyadic combinatorics and arbitrarily small capacity with positive capacity in every subtree]
Let $\varepsilon>0$ be any small number, and $T=T_2$ a dyadic tree with edge root $\omega$. Let $n=n(\omega)$ be the number of steps one has to move to the left, starting from the root, before finding an edge $\alpha_{n}^\omega$ such that $c_p(\partial T_{n,\omega})\leq \varepsilon/2$, where $T_{n,\omega}$ denotes $\alpha_{n}^\omega-$tent. Let $\lbrace \alpha_1^\omega,\dots,\alpha_{n}^\omega\rbrace$ be the geodesic from the root to $\alpha_n^\omega$, and $\beta_j$ be the right brother of $\alpha_j^\omega$, i.e. the only edge with $b(\beta_j)=b(\alpha_j^\omega)$. In each subtree $T_{\beta_j}$, $j=1,\dots,n$, starting from the root $\beta_j$, move to the left, say $n(\beta_j)$ steps, until you find an edge $\alpha_{n(\beta_j)}^{\beta_j}$ such that $c_p(\partial T_{n(\beta_j),\beta_j})\leq \varepsilon/(2^2n)$. 
Then we iterate the process: $\lbrace \alpha_1^{\beta_j},\dots,\alpha_{n(\beta_j)}^{\beta_j}\rbrace$ is the geodesic from $\beta_j$ 
to $\alpha_{n(\beta_j)}^{\beta_j}$ and $\gamma_i=\gamma_i(j)$ the right brother of $\alpha_i^{\beta_j}$. In each subtree $T_{\gamma_i}$ we individuate 
as before, always moving to the left, subtrees with $c_p(\partial T_{n(\gamma_i),\gamma_i})\leq \varepsilon/(2^3n(\beta_j))$, and so on. Let
\begin{equation*}
 A_1= \partial T_{\alpha_{n}^\omega}, \quad A_2=\bigcup_{j=1}^n \partial T_{n(\beta_j),\beta_j}, \quad A_3=\bigcup_{j=1}^n\bigcup_{i=1}^{n(\beta_j)} \partial T_{n(\gamma_i)\gamma_i}, \quad \dots, \quad \text{and set} \quad A=\bigcup_k A_k.
\end{equation*}
By construction, for every $\alpha\in E$ the tree $T_\alpha$ contains a tent with boundary in $A$. Since tents have positive capacity (for example by the rescaling property of Proposition \ref{prop:rescaling}), it follows that $c_p(A\cap\partial T_\alpha)>0$ for every $\alpha\in E$. On the other hand,
\begin{equation*}
c_p(A)\leq\sum_k c_p(A_k)\leq \sum_k \frac{\varepsilon}{2^k}=\varepsilon.
\end{equation*}
For the regularity, observe that by construction for every point $\xi\in A$, there exists some edge $\alpha$ such that $\xi\in\partial T_\alpha\subseteq A$. Therefore, if $\mu$, $\mu^\alpha$ are the equilibrium measures for $A$ and $\partial T_\alpha$ respectively, we have
\begin{equation*}
V_p\mu(\xi)=V_{p,\alpha} \mu(\xi)+V_p \mu(b(\alpha))=\big(1-V_p\mu(b(\alpha)) \big)V_{p,\alpha} \mu^\alpha(\xi)+V_p\mu(b(\alpha))=1,
\end{equation*}
by the regularity of homogeneous trees.
\end{exmpl}

\section{Branched continued fractions}\label{SecBranching}
Theorem \ref{main 1} can be reformulated in terms of \textit{branched continued fractions}. 
Besides adding further interesting structure to the class of equilibrium measures, this provides a recursive formula for concretely calculating capacity of sets. An accessible survey on branched continued fractions is in \cite{bodnar1999}.

\begin{prop}\label{cont frac}
Let $f:E\to\mathbb{R^+}$ be any non-negative function such that $I_pf<1$ on $V$, and consider the associated rescaled function defined by
\begin{equation}\label{sequence condition}
c(\alpha)=\frac{f(\alpha)}{\Big(1-I_pf(b(\alpha))\Big)^{p-1}}.
\end{equation}
Then, $f$ is the potential of a measure $\mu$ on $\partial T$ if and only if $c$ is defined, for each edge $\alpha$ which is not a leaf, by the following recursive formula
\begin{equation}\label{eq:precursive}
c(\alpha)=\frac{\displaystyle\sum_{\beta\in s(\alpha)}c(\beta)}{\left(1+\left(\displaystyle\sum_{\beta\in s(\alpha)}c(\beta)\right)^{p^\prime-1}\right)^{p-1}}.
\end{equation}

\end{prop}

\begin{proof}
By \eqref{sequence condition}, $f(\omega)=c(\omega)$. Denote by $\alpha^-$ the parent of $\alpha$, i.e. the only edge $\alpha^-$ such that $\alpha\in s(\alpha^-)$. For every $\alpha\neq\omega$, we have
\begin{equation*}
\begin{split}
f(\alpha)^{p'-1}&=c(\alpha)^{p^\prime-1}\Big(1-I_pf(b(\alpha))\Big)=c(\alpha)^{p^\prime-1}\left(1-I_pf(b(\alpha^-))-f(\alpha^-)^{p'-1}\right)\\
&=c(\alpha)^{p^\prime-1}\left(\frac{f(\alpha^-)^{p'-1}}{c(\alpha^-)^{p^\prime-1}} -f(\alpha^-)^{p'-1}\right)=c(\alpha)^{p^\prime-1}f(\alpha^-)^{p'-1}\frac{1-c(\alpha^-)^{p^\prime-1}}{c(\alpha^-)^{p^\prime-1}}.
\end{split}
\end{equation*}
Iterating we obtain,
\begin{equation}\label{eq:precursive measure}
f(\alpha)=c(\alpha)\prod_{\gamma<\alpha}\left(1-c(\gamma)^{p^\prime-1}\right)^{p-1}.
\end{equation}
Hence, for any chosen edge $\alpha$ which is not a leaf, it holds
\begin{equation*}
\sum_{\beta\in s(\alpha)}f(\beta)=\prod_{\gamma\leq\alpha}\left(1-c(\gamma)^{p^\prime-1}\right)^{p-1}\sum_{\beta\in s(\alpha)}c(\beta).
\end{equation*}
Now, $f$ is the co-potential of a measure if and only if the flow condition \eqref{eq:flow tree} holds. Namely, using \eqref{eq:precursive measure}, \eqref{eq:flow tree} holds if and only if
\begin{equation*}
c(\alpha)=\left(1-c(\alpha)^{p^\prime-1}\right)^{p-1}\sum_{\beta\in s(\alpha)}c(\beta),
\end{equation*}
which is equivalent to \eqref{eq:precursive}, as can be seen solving with respect to $c(\alpha)$.
\end{proof}

By the rescaling properties of equilibrium measure (Proposition \ref{prop:rescaling}), we have that if $\mu$ is the $p-$equilibrium measure for a set $A\subseteq\partial T$, then $c(\alpha)=c_{p,\alpha}(A_\alpha)$. 
This gives us an algorithm to calculate the capacity of a set in $\partial T$ in terms of successive tents capacities. Moreover, by relation \eqref{eq:precursive} we deduce that capacities can be expressed by means of 
branched continued fractions. For example, by \eqref{eq:precursive} we obtain the expression
\begin{equation*}
c_2(\partial T)=\frac{1}{1+\displaystyle\frac{1}{\displaystyle\sum_{\beta\in s(\omega)}\displaystyle\frac{1}{1+\displaystyle\sum_{\gamma\in s(\beta)}\frac{1}{1+\displaystyle\dots}}}} \quad.
\end{equation*}

In \cite[p. 57]{soardi} the same structure was observed for the \textit{total resistence} $R$ of an infinite tree without edges of degree 1. In particular, for such a class of trees, we obtain the relation
\begin{equation*}
c_2(\partial T)=\frac{1}{1+R}.
\end{equation*}

To end the section, we give a reformulation of Theorem \ref{main 1} which provides a characterization of equilibrium measures by means of an equation for capacities.

\begin{thm}
(i) Let $\mu$ the equilibrium measure for a set $A\subseteq \partial T$ and $f=I^\ast\mu$. Then the function $c=c_f$ given by \eqref{sequence condition} solves 
\begin{equation}\label{formula for capacities}
c(\alpha)\left(1-c(\alpha)^{p'-1}\right)=\sum_{\beta>\alpha}c(\beta)^{p'}\prod_{\alpha\leq\gamma<\beta}\left(1-c(\gamma)^{p'-1}\right)^p, \quad \alpha\in E.
\end{equation}
(ii) Let $\mu$ be a measure on $\partial T$ such that $V_p(\mu)<1$ on $V$. Set $f=I^\ast\mu$. If the function $c=c_f$ solves \eqref{formula for capacities}, then there exists an $\mathcal{F}_{\sigma}$ set $A\subseteq\partial T$ such that $\mu$ is its $p$-equilibrium measure.
\end{thm}
\begin{proof}
Given any measure $\mu$, setting $f=I^\ast \mu$ and $c=c_f$, by (\ref{sequence condition}) we have
\begin{equation*}
f(\alpha)\big(1-I_pf(b(\alpha))\big)=\frac{f(\alpha)^{p'}}{c(\alpha)^{p'-1}}, \quad \alpha\in E.
\end{equation*}
If $\mu$ is an equilibrium measure then \eqref{eq:formula} holds, and conversely if $\mu$ is such that $V_p(\mu)<1$ and solves \eqref{eq:formula} it is the equilibrium measure of some $\mathcal{F}_{\sigma}$ set. But \eqref{eq:formula} holds if and only if
\begin{equation*}
f(\alpha)^{p'}\left(1-c(\alpha)^{p'-1}\right)=c(\alpha)^{p'-1}\sum_{\beta>\alpha}f(\beta)^{p'}.
\end{equation*}
But by Proposition \ref{cont frac}, we know that $c(\alpha)$ solves \eqref{eq:precursive}, or equivalently, $f(\alpha)$ is defined by \eqref{eq:precursive measure}. Hence, sobstituting above we get
\begin{equation*}
c(\alpha)^{p'}\prod_{\gamma<\alpha}\left(1-c(\gamma)^{p^\prime-1}\right)^p\left(1-c(\alpha)^{p'-1}\right)=c(\alpha)^{p'-1}\sum_{\beta>\alpha}c(\beta)^{p'}\prod_{\gamma<\beta}\left(1-c(\gamma)^{p^\prime-1}\right)^p,
\end{equation*}
which is (\ref{formula for capacities}).
\end{proof}

\bibliography{equilibrium}
	\bibliographystyle{siam}

\end{document}